\numberwithin{equation}{section}
\newcommand{\norm}[1]{\lVert #1 \rVert}
\newcommand{\normbb}[1]{\bigg\lVert #1 \bigg\rVert}
\newcommand{\inner}[2]{\langle #1 , #2\rangle}
\newcommand{\innerb}[2]{\big\langle #1 , #2\big\rangle}
\newcommand{\innerB}[2]{\Big\langle #1 , #2\Big\rangle}
\newcommand{\R}{\mathbb{R}}
\newcommand{\C}{\mathbb{C}}
\newcommand{\N}{\mathbb{N}}
\newcommand{\E}[2][]{\mathbb{E}_{#1}\!\left[ #2 \right]}
\newcommand{\cF}{\mathcal{F}}
\newcommand{\lmin}{\lambda_{\text{min}}}
\newcommand{\lmax}{\lambda_{\text{max}}}
\theoremstyle{plain}
\newtheorem{definition}{Definition}[section]
\newtheorem{theorem}[definition]{Theorem}
\newtheorem{lemma}[definition]{Lemma}
\newtheorem{corollary}[definition]{Corollary}
\newtheorem{assumption}{Assumption}
\theoremstyle{definition}
\newtheorem{remark}[definition]{Remark}
\begin{document}

\thanks{
  This work was partially supported by the Wallenberg AI, Autono\-mous Systems 
  and Software Program (WASP) funded by the Knut and Alice Wallenberg Foundation. 
}

\title{SRKCD: a stabilized Runge--Kutta method for stochastic optimization}

\author[T.~Stillfjord]{Tony Stillfjord}
\email{tony.stillfjord@math.lth.se}

\author[M.~Williamson]{M\r{a}ns Williamson}
\email{mans.williamson@math.lth.se}

\address{Centre for Mathematical Sciences\\
  Lund University\\
  P.O.\ Box 118\\
  221 00 Lund, Sweden}
\keywords{stochastic optimization; convergence analysis; Runge--Kutta--Chebyshev; stability;}
\subjclass[2010]{90C15; 65K05; 65L20}




\begin{abstract}
  We introduce a family of stochastic optimization methods based on the Runge--Kutta--Chebyshev (RKC) schemes. The RKC methods are explicit methods originally designed for solving stiff ordinary differential equations by ensuring that their stability regions are of maximal size.
  In the optimization context, this allows for larger step sizes (learning rates) and better robustness compared to e.g.\ the popular stochastic gradient descent method.
  Our main contribution is a convergence proof for essentially all stochastic Runge--Kutta optimization methods. This shows convergence in expectation with an optimal sublinear rate under standard assumptions of strong convexity and Lipschitz-continuous gradients. For non-convex objectives, we get convergence to zero in expectation of the gradients. The proof
  requires certain natural conditions on the Runge--Kutta coefficients, and we further demonstrate that the RKC schemes satisfy these.
  Finally, we illustrate the improved stability properties of the methods in practice by performing numerical experiments on both a small-scale test example and on a problem arising from an image classification application in machine learning.
\end{abstract}

\maketitle

\section{Introduction}\label{section:introduction}
In this article we consider the optimization problem
\begin{equation*}
  \min_{w} F(w)
\end{equation*}
where $F$ is differentiable. Such problems frequently arise in many contexts, e.g.\ for training neural networks in the currently popular subject of machine learning. We focus on the large-scale case where computing $\nabla F(w)$ is expensive, and assume that cheap approximations $g(\xi, w) \approx \nabla F(w)$ are available.

At a (local) minimum $w_*$, it holds that $\nabla F(w_*) = 0$, and such a stationary point of the gradient may be found by evolving the gradient flow
\begin{equation*}
  \dot{w}(t) = - \nabla F(w(t))
\end{equation*}
over the pseudo-time $t \in [0, \infty)$.
The benefit of this reformulation is that many optimization methods for the original problem may now be stated as time-stepping methods for the gradient flow. We recognize e.g. the explicit Euler method with varying step sizes $\alpha_k$
\begin{equation*}
  w_{k+1} = w_k - \alpha_k \nabla F(w_k)
\end{equation*}
as the gradient descent (GD) method. The popular \emph{stochastic} gradient descent (SGD)~\cite{RobbinsMonro.1951} method uses the same formula but with the approximation $g(\xi_k, w_k)$ instead of $\nabla F(w_k)$, where $\xi_k$ is a random variable that typically indicates which randomly chosen parts of $F$ to look at. SGD is therefore a perturbed version of explicit Euler.

As was observed already in~\cite{OwensFilkin.1989}, the gradient flows arising from neural networks tend to be stiff. As a consequence, explicit methods suffer from severe step size restrictions. This is particularly inconvenient when one wants to reach a stationary state, which typically requires evolving the system for a long time. While it is difficult to quantify exactly how the stochasticity introduced in methods like SGD affects this, they suffer from similar step size restrictions.

A way to avoid such step size restrictions would be to instead use methods with better stability properties, such as A-stable methods. This, however, requires that method is implicit. One such method would be implicit Euler, which, when applied to the gradient flow is equivalent to the proximal point method in the context of optimization~\cite{Bianchi.2016,ESW.2020}. While this can be applied in certain cases when $F$ has a specific structure that allows the arising nonlinear equation systems to be solved efficiently, in general (usually) this is not feasible.

An alternative, which to our knowledge has only been considered to a very small extent in the optimization community, is the use of explicit stabilized schemes.  These are constructed such that their stability regions are maximized. Thus, there will still be a step size restriction, but of a more benign type. A large class of such methods are the Runge-Kutta-Chebyshev methods~\cite{vanderHouwenSommeijer.1980}, see also~\cite{HundsdorferVerwer.2003} for an overview and further references. They are explicit Runge-Kutta methods, i.e.\ of the form
\begin{align*}
      w_{k,i} &= w_{k} - \alpha_k \sum_{j=1}^{i}{ a_{i,j} \nabla F(\xi_k, w_{k,j-1})},  \quad i = 0, \ldots, s-1,\\
      w_{k+1} &= w_{k} - \alpha_k \sum_{i=1}^{s}{b_i \nabla F(\xi_k, w_{k,i-1})},
\end{align*}
where the coefficients $a_{i,j}$ and $b_i$ have been chosen in a very specific way such that the stability region extends as far into the left half-plane as possible. The tradeoff compared to GD is that such a scheme with $s$ stages requires $s$ times as many gradient evaluations. However, it still pays off, because the stability region grows as $s^2$. An optimization method called the Runge-Kutta-Chebyshev descent (RKCD) based on this idea has recently been investigated in~\cite{EftekhariEtal.2021}. However, only for the case where $\nabla F$ can be computed exactly and for a rather restrictive class of problems.
In this article, we propose a stochastic version of such a scheme which we call the stochastic Runge-Kutta-Chebyshev descent (SRKCD). Compared to e.g.\ SGD, it has superior stability properties.

There are of course other advanced methods that can be applied to the problem, and there is a rather large number of papers on the subject. We refer to~\cite{BottouCurtisNocedal.2018} for a general overview. Here, we mention for example accelerated gradient-type methods such as the SGD with momentum~\cite{Polyak.1964,Sutskever_etal.2013}, the stochastic heavy ball method~\cite{GadatPanloupSaadane.2018} and Nesterov's accelerated gradient method~\cite{Nesterov.1983}. These do not use only the approximate gradient at the current iteration $w_k$ but modify this gradient using other gradient information acquired in previous steps. A different class of methods are the adaptive learning rate methods, containing e.g.\ AdaGrad~\cite{DuchiHazanSinger.2011}, AdaDelta~\cite{Zeiler.2012}, Adam~\cite{KingmaBa.2017}, RMSprop~\cite{Hinton.2018} and AdaMax~\cite{KingmaBa.2017}. These are typically formulated as adapting the step size $\alpha_k$ based on a constantly updated model of the local cost landscape, acquired from gradient information computed in previous iterations. However, since most of them adjust the step size for each component of $w_k$ separately, they could in a certain sense be seen as instead modifying the approximation $g(\xi_k, w_k)$ like the accelerated gradient methods.

In contrast to this, the method we propose simply uses the available gradient information without modifications and allows each step to be longer. Just like SGD may be extended to e.g.\ SGD with momentum, one might also consider SRKCD with momentum, provided that further analysis on the properties of this combined method is performed.

The main contribution of this article is a rigorous proof of convergence for a general Runge-Kutta method, under weak assumptions on its coefficients and standard assumptions on the optimization problem and the approximations $g(\xi, w)$. We emphasize that while the proof applies to SRKCD, it is more widely applicable. We consider two settings. First, the usual strongly convex setting, wherein we can prove optimal convergence orders of the type $\mathcal{O}(1/k)$. Secondly, the fully non-convex setting where we show that the squared norm of $\nabla F(w_k)$ goes to zero in expectation. This is also essentially optimal. In both cases, the results are direct extensions of similar results for SGD.

We note that nonlinear stability analysis is a very complex topic with few generally applicable results, and that the stability region of a method only refers to the setting of linear problems. For these reasons, it is not possible to use the available information on the RKC stability regions to tailor the general convergence proof further for SRKCD. The benefits of the improved stability properties in SRKCD are therefore not directly illustrated by the convergence proof. For this reason, we also perform numerical experiments which demonstrate that in practice they are present also in the stochastic non-linear and non-convex setting.

The outline of the paper is as follows. Section~\ref{section:error_analysis} contains the main error analysis for the general Runge-Kutta methods. It begins by formalizing the notation and assumptions on the problem, then presents preliminary results in Subsections~\ref{subsection:preliminary} and~\ref{subsection:onestep_bound}. The actual convergence proofs are presented in Subsections~\ref{subsection:convex_convergence} (convex case) and~\ref{subsection:nonconvex} (nonconvex case). Then we study the SRKCD method specifically in Section~\ref{section:SRKCD_analysis} and discuss its properties. The numerical experiments follow in Section~\ref{section:experiments} and we sum up some conclusions in Section~\ref{section:conclusions}. Finally, Appendix~\ref{section:auxiliary} contains a few results on Chebyshev polynomials which are needed but which are otherwise not of interest here.

\section{General Runge--Kutta error analysis}\label{section:error_analysis}

Let us first fix the notation and specify our assumptions on the underlying problem.
We denote by $\norm{\cdot}$ the usual Euclidean norm on $\R^d$ and by $\inner{\cdot}{\cdot}$ the corresponding inner product $\inner{u}{v} = v^Tu$.
Let $\left(\Omega, \cF, \mathbb{P} \right)$ denote a complete probability space. For a random variable $\xi$ on $\Omega$, we consider the functions $f(\xi, \cdot) : \Omega \times \R^d \to \R$ and the main objective function $F: \R^d \to \R$,
\begin{equation*}
  F(w) = \E[\xi]{f(\xi, w)}.
\end{equation*}
Here, $\E[\xi]{\cdot}$ denotes the expectation with respect to the probability distribution of $\xi$. We note that we have not specified the target space of the random variable $\xi$, because its properties does not matter for our analysis. However, if $\omega \in \Omega$ then $\xi(\omega)$ should be interpreted as a specific selection of the problem data, in machine learning terminology known as a batch. A typical situation would be to have a finite amount of uniformly distributed data, e.g.\ $F(w) = \frac{1}{N}\sum_{j = 1}^{N}{f(j, w)}$. Then a specific realization of $\xi(\omega)$ could be a single $j$, corresponding to a single data sample. Alternatively, in the common mini-batch setting, a realization of $\xi(\omega)$ could be a $B_{\xi} \subset \{1, \ldots, N\}$, corresponding to a small subset of the data.

We approximate $\nabla F(w)$ by $g(\xi, w)$, where $g(\xi(\cdot), \cdot): \Omega \times \R^d \to \R^d$ is integrable. In the above typical situation, we would usually have either $g(\xi, w) = \nabla f(\xi, w)$ (single sample) or $g(\xi, w) = \frac{1}{|B_{\xi}|} \sum_{j \in B_{\xi}} {\nabla f(j, w)}$ with $B_{\xi} \subset \{1, \ldots, N\}$ (mini-batch).
In general, we consider a sequence of jointly independent random variables $\{\xi_k\}_{k=1}^{\infty}$  on the probability space $\left( \Omega,\cF,\mathbb{P} \right)$, with the idea that step $k$ of the method will depend on a realization of $\xi_k$. For such a sequence we define the total expectation $\E[k]{X}$ of a random variable $X$ by
\begin{align*}
\E[k]{X} = \E[\xi_1]{\E[\xi_{2}]{ \dots \E[\xi_{k-1}]{X} } }.
\end{align*}
As the variables $\xi_k$ are jointly independent, this coincides with the expectation of $X$ with respect to the joint probability distribution of $(\xi_1, \dots, \xi_{k})$. 

The following assumptions on the full problem are standard:
\begin{assumption}\label{ass:Lipschitz}
 $F: \R^d \to \R$ is continuously differentiable and $\nabla F$ is Lipschitz continuous with Lipschitz constant $L>0$:
 \begin{align*}
 \norm{\nabla F(u)  - \nabla F(v)}\leq L
 \norm{u -v},\ \forall u, v \in \R^d.
 \end{align*}
\end{assumption}

\begin{assumption}\label{ass:convex}
  $F$ is strongly convex with convexity constant $c>0$. That is,
  \begin{align*}
 F(u)  \geq F(v) + \inner{\nabla F(v)}{v-u} + \frac{c}{2}\norm{v -u}^2, \ \forall u, v \in \R^d.
  \end{align*}
\end{assumption}

We also make standard assumptions on the approximation $g$. The first is that it is Lipschitz-continuous with respect to the second argument:
\begin{assumption}\label{ass:Lipschitz_stochastic}
The function $g$ is Lipschitz continuous with respect to the second argument with (for simplicity) the same Lipschitz constant $L>0$ as $\nabla F$:
 \begin{equation*}
 \norm{g(\xi, u) - g(\xi, v)}\leq L
 \norm{u - v},\ a.s.\ \forall u, v \in \R^d.
 \end{equation*}
\end{assumption}

Next, we assume that $g$ is a reasonable approximation to $\nabla F$ in the following sense, following~\cite{BottouCurtisNocedal.2018}:
\begin{assumption}\label{ass:momentlimits}
There exist scalars $\mu_G \geq \mu > 0$, $M \ge 0$ and $M_G \ge \mu^2$ such that the gradient $\nabla F$ and its approximation $g$ satisfy the following conditions for all $w \in \R^d$:
\begin{enumerate}[label=(\roman*)]
\item $ \inner{\nabla F(w)}{\mathbb{E}_{\xi}[g(\xi, w)]} \geq \mu \norm{ \nabla F(w)}^2$,
\item $\norm{\E[\xi]{g(\xi, w)}} \leq \mu_G \norm{ \nabla F(w)}$ \text{and}
\item $\E[\xi]{\norm{g(\xi, w)}^2} \leq M + M_G \norm{\nabla F(w)}^2$.
\end{enumerate}
\end{assumption}
\noindent Assumption~\ref{ass:momentlimits} (i) and (ii) are fulfilled by assumption with $\mu = \mu_G = 1$ if we are considering (e.g.) the single sample case $g(\xi, w) = \nabla f(\xi, w)$. The third item puts a weak limit on the variance, which means that the approximation to the gradient is not too noisy.

\begin{remark}
  We note that the statements ``for all $w \in \R^d$'' in the above assumptions could be replaced by ``for all $w_k$'', where $w_k$ are the method iterates, i.e.\ the assumptions only need to hold where the method is actually evaluated. However, this is not helpful in practice, since the iterates are not known a priori.
\end{remark}

Finally, we make a general assumption on the numerical optimization method. As shown in the previous section, this will be satisfied in particular for the SRKCD method.

\begin{assumption} \label{ass:general_RK}
  Given a sequence of step sizes $\{\alpha_k\}_{k\in \N}$ and an initial condition $w_1 \in \R^d$, the optimization method is of the form
    \begin{align*}
      w_{k,i} &= w_{k} - \alpha_k \sum_{j=1}^{i}{ a_{i,j} g(\xi_k, w_{k,j-1})},  \quad i = 0, \ldots, s-1,\\
      w_{k+1} &= w_{k} - \alpha_k \sum_{i=1}^{s}{b_i g(\xi_k, w_{k,i-1})}.
    \end{align*}
  For brevity, denote $a_{s,j} \colonequals b_j$, $j = 1, \ldots, s$. With this notation, the coefficients $a_{i,j}$ satisfy
  \begin{enumerate}[label=(\roman*)]
  \item $\sum_{i=1}^{s} a_{s,i} = 1$,
  \item $\sum_{j=1}^{i} |a_{i,j}| \le 1, \quad i = 0, \ldots, s$.
    \end{enumerate}
\end{assumption}
\noindent We note that item \textit{(i)} would be satisfied for any Runge-Kutta method which is of order $1$ when applied to $\dot{w} = -\nabla F(w)$.

\subsection{Preliminary results}\label{subsection:preliminary}
In the following lemma, we list some consequences of the basic assumptions.

\begin{lemma}\label{lemma:preliminary}
  Under Assumption~\ref{ass:Lipschitz} and~\ref{ass:convex}, there exists a unique $w_* \in \R^d$ such that
  \begin{equation*}
    F(w_*) = \min_{w \in \R^d} F(w)
  \end{equation*}
  and $\nabla F(w_*) = 0$. Further, it follows that
  \begin{equation}\label{eq:Lipschitz_F_consequence}
    F(u) - F(v) 
    \leq \innerb{\nabla F(v)}{u - v} + \frac{L}{2} \norm{u - v}^2 
  \end{equation}
  for all $u, v \in \R^d$.
  Finally, the difference $F(w)-F(w_*)$ is bounded by
  \begin{equation} \label{eq:bound_error_with_gradient}
    2c \left(F(w)-F(w_*) \right) \leq \norm{\nabla F(w)}^2.
  \end{equation}
\end{lemma}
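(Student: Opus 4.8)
The plan is to handle the three assertions separately, since each rests on a distinct standard tool and they barely interact.

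For existence and uniqueness of $w_*$, I would first extract coercivity from strong convexity: fixing a reference point (say the origin) in Assumption~\ref{ass:convex} and estimating the linear term by Cauchy--Schwarz produces a bound of the shape $F(w) \ge F(0) - \norm{\nabla F(0)}\norm{w} + \frac{c}{2}\norm{w}^2$, whose right-hand side diverges as $\norm{w} \to \infty$. Thus every sublevel set of $F$ is bounded, and closed by continuity (Assumption~\ref{ass:Lipschitz}), hence compact, so the Weierstrass theorem yields a global minimizer $w_*$; Fermat's rule then forces $\nabla F(w_*) = 0$ because the minimum is interior and $F$ is differentiable. Uniqueness follows by applying strong convexity to any two minimizers in both orders and adding the inequalities: using $\nabla F = 0$ at each point, the linear terms drop and one is left with $0 \ge c\norm{w_1 - w_2}^2$, forcing $w_1 = w_2$.

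The bound \eqref{eq:Lipschitz_F_consequence} is the classical descent lemma. I would write $F(u) - F(v) = \int_0^1 \inner{\nabla F(v + t(u-v))}{u - v}\diff{t}$ by the fundamental theorem of calculus, subtract $\inner{\nabla F(v)}{u-v}$, and bound the resulting integrand by Cauchy--Schwarz and then the Lipschitz estimate of Assumption~\ref{ass:Lipschitz}, giving $\int_0^1 Lt \norm{u-v}^2 \diff{t} = \frac{L}{2}\norm{u-v}^2$. For \eqref{eq:bound_error_with_gradient} the idea is to minimize the strong convexity lower bound over its free argument: completing the square in the right-hand side of Assumption~\ref{ass:convex} shows that its minimum value is $F(v) - \frac{1}{2c}\norm{\nabla F(v)}^2$. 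Since the inequality holds for every choice of that argument, specializing it to $w_*$ gives $F(w_*) \ge F(v) - \frac{1}{2c}\norm{\nabla F(v)}^2$, and rearranging with $v = w$ is exactly \eqref{eq:bound_error_with_gradient}.

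None of these steps hides a deep difficulty. The one place that needs genuinely non-algebraic care is the existence argument, which must invoke compactness of sublevel sets and Weierstrass rather than inequality manipulation alone. The only other thing to watch is careful sign bookkeeping in the strong convexity inequality under the convention $\inner{u}{v} = v^Tu$, so that the completing-the-square step lands on the constant $\frac{1}{2c}$ rather than its negative.
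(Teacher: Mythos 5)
Your proposal is correct and follows essentially the same route as the paper, which simply delegates the three claims to standard references (existence/uniqueness to Bauschke--Combettes, the descent lemma to a first-order Taylor expansion, and the Polyak--{\L}ojasiewicz-type bound \eqref{eq:bound_error_with_gradient} to Bottou--Curtis--Nocedal); your coercivity-plus-Weierstrass, fundamental-theorem-of-calculus, and completing-the-square arguments are exactly the standard details behind those citations. Your remark about sign bookkeeping is well taken: the strong convexity inequality must be used in the orientation $F(u) \geq F(v) + \inner{\nabla F(v)}{u-v} + \frac{c}{2}\norm{u-v}^2$ for the minimization over $u$ to yield $F(v) - \frac{1}{2c}\norm{\nabla F(v)}^2$, and this is the orientation your argument (correctly) employs.
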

\begin{proof}
  The existence of a unique minimizer in this benign situation is well-known, see e.g.~\cite[Corollary 11.17]{BauschkeCombettes.2017}. The first inequality follows directly from a first-order expansion in Taylor series and Assumption~\ref{ass:Lipschitz}. For the final inequality, see e.g.~\cite[Appendix B]{BottouCurtisNocedal.2018}.
\end{proof}

\subsection{Bound on \texorpdfstring{$\norm{w_{k+1} - w_{k}}$}{a single step}}\label{subsection:onestep_bound}
First, we consider what the method does in one step and bound $\norm{w_{k+1} - w_{k}} = \norm{w_{k,s} - w_{k,0}}$. 
To this end, we now define a sequence of polynomials $P_n(\alpha)$, $n = 0, \ldots, s$, by
\begin{align*}
P_0(\alpha) &= 0, \quad P_1(\alpha) = \alpha ,\\
P_n(\alpha) &= \alpha  +  \alpha L
\sum_{i=1}^n|a_{n,i}| P_{i-1}(\alpha), \text{ where } 1 \leq n \leq s. 
\end{align*}
Note that the sequence depends on $s$, but for brevity we do not add an extra index to indicate this.
\begin{lemma} \label{lemma:RKCstep_bound}
  Let Assumption~\ref{ass:Lipschitz_stochastic} and~\ref{ass:general_RK} be satisfied.
 Then for a fixed $s$, it holds that $\norm{w_{k,n} - w_{k,0}} \leq P_n(\alpha_k)\norm{g(\xi_k, w_{k,0})} $ for all $n \leq s$.
\end{lemma}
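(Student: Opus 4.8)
The plan is to prove the bound by strong induction on $n$. First I would unify the two formulas in Assumption~\ref{ass:general_RK} via the convention $a_{s,j} := b_j$, so that in every case $0 \le n \le s$ the stage difference takes the single form
\begin{equation*}
  w_{k,n} - w_{k,0} = -\alpha_k \sum_{j=1}^{n} a_{n,j}\, g(\xi_k, w_{k,j-1}),
\end{equation*}
where the empty sum handles $n=0$ (giving $w_{k,0}=w_k$ and hence $w_{k,0}-w_{k,0}=0$). The base case $n=0$ is then immediate, since both sides vanish and $P_0 \equiv 0$.

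For the inductive step, I would assume the claimed bound holds for all indices strictly less than $n$. Applying the triangle inequality to the display above gives $\norm{w_{k,n}-w_{k,0}} \le \alpha_k \sum_{j=1}^{n} |a_{n,j}|\,\norm{g(\xi_k, w_{k,j-1})}$. The crux is to control each factor $\norm{g(\xi_k, w_{k,j-1})}$ in terms of $\norm{g(\xi_k, w_{k,0})}$. I would do this by adding and subtracting $g(\xi_k, w_{k,0})$, using the Lipschitz Assumption~\ref{ass:Lipschitz_stochastic} to bound $\norm{g(\xi_k, w_{k,j-1}) - g(\xi_k, w_{k,0})} \le L\norm{w_{k,j-1}-w_{k,0}}$, and then invoking the induction hypothesis (valid because $j-1 \le n-1 < n$) to replace $\norm{w_{k,j-1}-w_{k,0}}$ by $P_{j-1}(\alpha_k)\norm{g(\xi_k,w_{k,0})}$. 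This yields $\norm{g(\xi_k, w_{k,j-1})} \le \big(1 + L\,P_{j-1}(\alpha_k)\big)\norm{g(\xi_k, w_{k,0})}$.

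Substituting this back and separating the resulting expression into two sums, I would apply Assumption~\ref{ass:general_RK}(ii), namely $\sum_{j=1}^{n}|a_{n,j}| \le 1$, together with $\alpha_k \ge 0$, to bound the first sum by $\alpha_k$; the second sum is precisely $\alpha_k L \sum_{j=1}^{n}|a_{n,j}| P_{j-1}(\alpha_k)$. Their total is exactly $P_n(\alpha_k)$ by the defining recursion of the polynomials, which closes the induction.

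I do not expect a genuine obstacle, as the argument is an elementary recursion; the polynomials $P_n$ were evidently designed to absorb exactly the Lipschitz-plus-induction estimate. The points to handle carefully are the bookkeeping: ensuring the strong induction reaches every stage index $j-1$ appearing on the right-hand side, using the convention $a_{s,j}=b_j$ so the final stage $n=s$ is treated on equal footing with the internal ones, and noting the implicit positivity $P_n(\alpha_k)\ge 0$, which itself follows inductively from $\alpha_k, L \ge 0$.
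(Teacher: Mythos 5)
Your proposal is correct and follows essentially the same route as the paper's proof: induction on the stage index, adding and subtracting $g(\xi_k, w_{k,0})$, applying the Lipschitz Assumption~\ref{ass:Lipschitz_stochastic} together with the induction hypothesis, and closing via Assumption~\ref{ass:general_RK}~(ii) and the defining recursion of $P_n$. The only cosmetic differences are that you anchor the induction at $n=0$ instead of $n=1$ and apply the triangle inequality before (rather than after) splitting off $g(\xi_k, w_{k,0})$, which changes nothing.
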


\begin{proof}
  We prove the statement by induction over $n$.
  In the case $n=1$ it follows immediately from the definition that $\norm{w_{k,1} - w_{k,0} } = |a_{1,1}| \alpha_k \norm{g(\xi_k, w_{k,0} )}$. Since $|a_{1,1}| \le 1$ by Assumption~\ref{ass:general_RK} (ii), the base case is satisfied.
Assume that the claim holds for all $i \leq n$ with $n < s$. Then, using Assumption \ref{ass:Lipschitz_stochastic} and the induction assumption
\begin{align*}
  &\norm{w_{k,n+1} - w_{k,0}}\\
  &\quad= \normbb{-\alpha_k \sum_{i=1}^{n+1} a_{n+1,i} g(\xi_k, w_{k,0})
-
\alpha_k
\sum_{i=1}^{n+1} a_{n+1,i} \big(g(\xi_k, w_{k,i-1} ) - g(\xi_k, w_{k,0})\big) }
 \\
&\quad\leq \alpha_k \sum_{i=1}^{n+1} |a_{n+1,i}| \norm{g(\xi_k, w_{k,0})}
+
\alpha_k
\sum_{i=1}^{n+1} |a_{n+1,i}|  \norm{(g(\xi_k, w_{k,i-1} )-g(\xi_k, w_{k,0}))} \\
&\quad\leq \alpha_k \sum_{i=1}^{n+1} |a_{n+1,i}| \norm{g(\xi_k, w_{k,0})}
+
\alpha_k L
\sum_{i=1}^{n+1} |a_{n+1,i}|  \norm{w_{k,i-1} - w_{k,0}}
 \\
&\quad\leq \alpha_k \sum_{i=1}^{n+1} |a_{n+1,i}|\norm{g(\xi_k, w_{k,0})}
+
\alpha_k L
\sum_{i=1}^{n+1} |a_{n+1,i}| P_{i-1}(\alpha_k) \norm{g(\xi_k, w_{k,0})}  \\
&\quad\leq P_{n+1}(\alpha_k) \norm{g(\xi_k, w_{k,0})},
\end{align*}
where we used Assumption~\ref{ass:general_RK} (ii) in the last step. This concludes the inductive step.

\end{proof}

\begin{lemma} \label{lemma:P_coefficients}
 Under Assumption~\ref{ass:general_RK}, it holds for $2 \le n \le s$ that
  \begin{equation*}
    P_n(\alpha) = \alpha + \alpha \sum_{i = 1}^{n-1}{(\alpha L)^i c_{n,i} }
  \end{equation*}
  where the $c_{n,i}$ are constants not depending on $\alpha$ or $L$. Further, $c_{n,i} \le 1$ for $2 \le n \le s$ and $1 \le i \le n-1$. 
\end{lemma}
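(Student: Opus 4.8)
The plan is to establish the claimed closed form by (strong) induction on $n$, which simultaneously produces an explicit recursion for the coefficients $c_{n,i}$, and then to bound these coefficients by a second induction on the lower index $i$.

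First, it is convenient to set $c_{n,0} \colonequals 1$ for every $n$, so that the assertion for $2 \le n \le s$ together with $P_1(\alpha) = \alpha$ can be written uniformly as
\[
  P_n(\alpha) = \alpha \sum_{i=0}^{n-1} (\alpha L)^i\, c_{n,i}, \qquad 1 \le n \le s.
\]
I would prove this form by strong induction on $n$, the case $n = 1$ being trivial (the sum beyond $i = 0$ is empty). For the inductive step I would substitute the already-established forms of $P_{j-1}$, $j \le n$, into the defining recursion $P_n(\alpha) = \alpha + \alpha L \sum_{j=1}^n |a_{n,j}| P_{j-1}(\alpha)$, noting that the term $j = 1$ drops out since $P_0 = 0$.

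Second, the main computational step is to collect powers of $\alpha L$ after this substitution. Writing $P_{j-1}(\alpha) = \alpha \sum_{i=0}^{j-2}(\alpha L)^i c_{j-1,i}$, the prefactor $\alpha L$ raises each exponent by one; shifting the power index by $m = i+1$ and interchanging the two summations, one reads off
\[
  c_{n,m} = \sum_{j=m+1}^{n} |a_{n,j}|\, c_{j-1,m-1}, \qquad 1 \le m \le n-1.
\]
This exhibits the $c_{n,i}$ as built solely from the $|a_{n,j}|$ and lower-order $c$'s, hence independent of $\alpha$ and $L$, which settles the first part of the claim. The only point requiring care is the bookkeeping of the summation ranges under the interchange: for a fixed $m$ the index $j$ runs from $m+1$ to $n$, and the maximal value of $m$ is $n-1$.

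Finally, I would prove $c_{n,i} \le 1$ by induction on $i$, which is natural because the recursion above lowers the second index by one. For $i = 0$ we have $c_{n,0} = 1$. Assuming $c_{j-1,m-1} \le 1$ for all the relevant $j$, the recursion together with Assumption~\ref{ass:general_RK}~(ii) yields
\[
  c_{n,m} = \sum_{j=m+1}^{n} |a_{n,j}|\, c_{j-1,m-1} \le \sum_{j=m+1}^{n} |a_{n,j}| \le \sum_{j=1}^{n} |a_{n,j}| \le 1.
\]
I do not expect a genuine obstacle here: the whole argument is a clean double induction, and the single delicate spot is the reindexing of the double sum that produces the coefficient recursion.
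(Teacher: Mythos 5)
Your proposal is correct and follows essentially the same route as the paper's proof: substitute the assumed form into the recursion defining $P_n$, collect powers of $\alpha L$ to obtain the coefficient recursion $c_{n,m} = \sum_{j=m+1}^{n} |a_{n,j}|\, c_{j-1,m-1}$, and bound each $c_{n,m}$ by $\sum_{j=1}^{n}|a_{n,j}| \le 1$ via Assumption~\ref{ass:general_RK}~(ii). The only differences are cosmetic (the convention $c_{n,0}=1$ and organizing the boundedness as a separate induction on the second index rather than folding it into the induction on $n$).
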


\begin{proof}
  Once again, we employ induction. For $n=2$, we have 
  \begin{equation*}
    P_2(\alpha) = \alpha + \alpha L (|a_{2,1}| \alpha),
  \end{equation*}
  which is on the stated form with $c_{2,1} = |a_{2,1}|$, and by Assumption~\ref{ass:general_RK} (ii), $c_{2,1} \le 1$. That is, the claim is valid for $n=2$. Assume that $P_n$ can be written on the stated form for all $i \le n$ and that all the constants $c_{n,i}$ are bounded by $1$. Then inserting this in the definition of $P_{n+1}$ shows that
  \begin{align*}
    P_{n+1} = \alpha &+ \alpha^2 L \sum_{i=2}^{n+1} |a_{n+1,i}| + \alpha^3 L^2 \sum_{i=3}^{n+1} |a_{n+1,i}| c_{i-1,1} \\
     &+ \alpha^4 L^3 \sum_{i=4}^{n+1} |a_{n+1,i}| c_{i-1,2} + \cdots + \alpha^{n+1} L^n |a_{n+1,n+1}| c_{n,n-1}.
  \end{align*}
  That is, we can write $P_{n+1}$ on the desired form by taking $c_{n+1,1} = \sum_{i=2}^{n+1} |a_{n+1,i}|$ and $c_{n+1,j} = \sum_{i=j+1}^{n+1} |a_{n+1,i}| c_{i-1, j-1}$ for $j = 2, \ldots, n$.
  By Assumption~\ref{ass:general_RK} (ii),
  \begin{equation*}
    c_{n+1,1} = \sum_{i=2}^{n+1} |a_{n+1,i}| \le \sum_{i=1}^{n+1} |a_{n+1,i}| \le 1.
  \end{equation*}
  Similarly, since all the $c_{i-i,j-1}$ are bounded by $1$ by the induction assumption,
   \begin{equation*}
    c_{n+1,j} = \sum_{i=j+1}^{n+1} |a_{n+1,i}| c_{i-1,j-1} \le \sum_{i=1}^{n+1} |a_{n+1,i}| \le 1.
  \end{equation*}
  for $j = 2, \ldots n$. This concludes the induction step.
\end{proof}

We can now bound the difference $F(w_{k,s}) - F(w_{k,0})$ by using~\eqref{eq:Lipschitz_F_consequence} from Lemma~\ref{lemma:preliminary} to write
\begin{equation*}
  F(w_{k,s}) - F(w_{k,0}) 
  \leq \innerb{\nabla F(w_{k,0})}{w_{k,s} - w_{k,0}} + \frac{L}{2} \norm{w_{k,s} - w_{k,0}}^2.
\end{equation*}
For the first term on the right-hand side, we add and subtract terms to get
\begin{align*}
   &\innerb{\nabla F(w_{k,0})}{w_{k,s} - w_{k,0}}  \\
  &\quad = \innerB{\nabla F(w_{k,0})}{-\alpha_k \sum_{i=1}^{s} a_{s,i} g(\xi_k, w_{k,0} )
    - \alpha_k \sum_{i=1}^{s} a_{s,i} (g(\xi_k, w_{k,i-1} )-g(\xi_k, w_{k,0} ))} \\
&\quad \leq -\alpha_k \sum_{i=1}^s a_{s,i} \innerb{\nabla F(w_{k,0})}{g(\xi_k, w_{k,0})}
+
\alpha_k L \sum_{i=1}^s |a_{s,i}| \norm{\nabla F(w_{k,0}) } \norm{w_{k,i-1} - w_{k,0} }   
\end{align*}
We now use Lemma~\ref{lemma:RKCstep_bound} and Young's inequality $ab \leq \frac{a^2}{4} + b^2$ with $a = \alpha_k \sqrt{L} \norm{\nabla F(w_{k,0}) }$ and $b = \sqrt{L} \norm{w_{k,i-1} -w_{k,0}}$ to bound the last sum in the previous expression 
\begin{align*}
&\sum_{i=1}^s |a_{s,i}| \alpha_k L  \norm{\nabla F(w_{k,0}) } \norm{w_{k,i-1} - w_{k,0} } \\ 
&\leq 
\frac{\alpha_k^2 L }{4} \sum_{i=1}^s |a_{s,i}| \norm{\nabla F(w_{k,0}) }^2
+
L \sum_{i=1}^s |a_{s,i}|  \norm{w_{k,i-1} - w_{k,0} }^2
\\ 
&\leq 
\frac{\alpha_k^2 L}{4} \sum_{i=1}^s |a_{s,i}|  \norm{\nabla F(w_{k,0}) }^2
+
 L \sum_{i=1}^s |a_{s,i}|P_{i-1}(\alpha_k)^2 \norm{g(\xi_k, w_{k,0})}^2.
\end{align*}
In total, we get (using Lemma~\ref{lemma:RKCstep_bound} again)
\begin{align*}
  &F(w_{k,s}) - F(w_{k,0}) \\
&\quad \leq -\alpha_k \sum_{i=1}^s a_{s,i} \innerb{\nabla F(w_{k,0})}{g(\xi_k, w_{k,0} )}
+
\frac{\alpha_k^2 L}{4} \sum_{i=1}^s |a_{s,i}| \norm{\nabla F(w_{k,0}) }^2 \\
&\qquad+
L \sum_{i=1}^s |a_{s,i}|P_{i-1}(\alpha_k)^2 \norm{g(\xi_k, w_{k,0})}^2 
  +
\frac{L}{2} P_s(\alpha_k)^2 \norm{g(\xi_k, w_{k,0} )}^2 .
\end{align*}
Taking expectations with respect to the distribution of $\xi_k$ (recall that $w_{k,0}$ doesn't depend on $\xi_k$) leads to
\begin{equation}\label{eq:RKCFbound1}
\begin{aligned}
  &\E[\xi_k]{F(w_{k,s}) - F(w_{k,0})} \\
&\quad \leq -\alpha_k \sum_{i=1}^s a_{s,i} \innerb{\nabla F(w_{k,0})}{ \E[\xi_k]{g(\xi_k, w_{k,0} )}}
+
\frac{\alpha_k^2 L}{4} \sum_{i=1}^s |a_{s,i}|  \norm{\nabla F(w_{k,0}) }^2\\
&\quad+
L \left( \sum_{i=1}^s |a_{s,i}|P_{i-1}(\alpha_k)^2 
  +
\frac{1}{2} P_s(\alpha_k)^2  \right)\E[\xi_k]{\norm{g(\xi_k, w_{k,0} )}^2 } .
\end{aligned}
\end{equation}
By
Assumption \ref{ass:momentlimits} we have that
\begin{equation*}
\E[\xi_k]{\norm{g(\xi_k, w_{k,0} )}^2} \leq M + M_G \norm{\nabla F(w_{k,0}) }^2,
\end{equation*}
and applying this to the last term of~\eqref{eq:RKCFbound1} gives
\begin{equation}\label{eq:RKCFbound2}
\begin{aligned}
  &\E[\xi_k]{F(w_{k,s}) - F(w_{k,0})} \\
&\qquad \leq - \alpha_k \mu \norm{\nabla F(w_{k,0})}^2
+
\frac{\alpha_k ^2L}{4} \sum_{i=1}^s |a_{s,i}|  \norm{\nabla F(w_{k,0}) }^2\\
&\qquad+
L \left( \sum_{i=1}^s |a_{s,i}|P_{i-1}(\alpha_k)^2 
  +
\frac{1}{2} P_s(\alpha_k)^2  \right)\left( M + M_G \norm{\nabla F(w_{k,0}) }^2\right) .
\end{aligned}
\end{equation}
Here we have also used Assumption~\ref{ass:momentlimits} (i) and Assumption~\ref{ass:general_RK} (i) on the first term on the right-hand side of~\eqref{eq:RKCFbound1} to obtain the $-\alpha_k \mu \norm{\nabla F(w_{k,0})}^2$-term in~\eqref{eq:RKCFbound2}. 
Reordering the terms, we find
\begin{equation}\label{eq:RKCFbound3}
  \begin{aligned}
    &\E[\xi_k]{F(w_{k,s})} - F(w_{k,0})  \\
    &\qquad \leq Q(\alpha_k) \norm{\nabla F(w_{k,0})}^2 
     + L \Big( \sum_{i=1}^s |a_{s,i}|P_{i-1}(\alpha_k)^2 
     + \frac{1}{2} P_s(\alpha_k)^2 \Big) M  .
\end{aligned}
\end{equation}
with
\begin{equation*}
Q(\alpha_k) = - \alpha_k \mu + LM_G \sum_{i=1}^s |a_{s,i}|P_{i-1}(\alpha_k)^2 
  +
\frac{LM_G}{2} P_s(\alpha_k)^2  +\frac{1}{4} \alpha_k^2L \sum_{i=1}^s |a_{s,i}| .
\end{equation*}

Since $P_0(\alpha_k) = 0$ and the smallest power of $\alpha_k$ in $P_{i}(\alpha_k)^2$ for $i=1,\ldots,s$ is $\alpha_k^2$, we can choose $\alpha_k > 0$ small enough that
\begin{equation}\label{eq:alpha_choice}
Q(\alpha_k) < -\frac{\alpha_k \mu}{2}  .
\end{equation}
This means that the first term in~\eqref{eq:RKCFbound3} is negative, and we can estimate it by using the strong convexity property
\begin{equation*}
  -\norm{\nabla F(w_{k,0})}^2 \le -2c \big(F(w_{k,0}) - F(w_*)\big)
\end{equation*}
from~\eqref{eq:bound_error_with_gradient} in Lemma~\ref{lemma:preliminary}.
Adding and subtracting $F(w_*)$, rearranging and taking total expectations on both sides thus leads to
\begin{equation}\label{eq:RKCFbound4}
\begin{aligned}
\E[k]{F(w_{k+1}) -F(w_*)}  &\leq 
\left(1-\alpha_k \mu c \right)\E[k]{F(w_k) -F(w_*)}
\\
&\quad +  \Big(L\alpha_k^2 + \frac{L}{2} P_s(\alpha_k)^2\Big) M + \frac{L}{4}\Big( \sum_{i=1}^s |a_{s,i}| P_{i-1}(\alpha_k) \Big)^2.
\end{aligned}
\end{equation}
This means that the next error is the previous error multiplied by a factor which is strictly less than one, plus two terms that are small. Hence it will tend to zero as $k \to \infty$, as we show formally in the next section.

\begin{remark}
Let us elaborate on the choice of $\alpha_k$ in~\eqref{eq:alpha_choice}.
 We can make the choice because the negative term is multiplied with $\alpha_k$ while the positive terms are all multiplied with higher powers of $\alpha_k$, meaning that for a sufficiently small $\alpha_k$ the negative term will dominate. To make this more concrete, suppose that $\alpha_k \le \frac{1}{Lm}$ for an integer $m \ge 2$. Then by Lemma~\ref{lemma:P_coefficients},
\begin{equation*}
  P_i(\alpha_k)^2 \le \alpha_k^2\Big(1 + \frac{1}{m} + \frac{1}{m^2} + \cdots + \frac{1}{m^{s-1}}\Big)^2 = \alpha_k^2\frac{m^2}{(m-1)^2} \le 4\alpha_k^2.
\end{equation*}
for every $i = 1, \ldots, s$.
Thus, since $\sum_{i=1}^s |a_{s,i}| \le 1$ by Assumption~\ref{ass:general_RK},
\begin{align*}
  Q(\alpha_k) &\le - \alpha_k \mu + \alpha_k^2 \Big( 4LM_G + 4\frac{LM_G}{2} + \frac{L}{4} \Big) \\
              &\le -\alpha_k \mu + L\alpha_k^2 ( 6 M_G + \frac{1}{4})\\
              &\le -\alpha_k \mu + \alpha_k \frac{ 6 M_G + \frac{1}{4}}{m}.
\end{align*}
This is bounded by $-\frac{\alpha_k\mu}{2}$ and thereby satisfies~\eqref{eq:alpha_choice} if
\begin{equation*}
  m \ge \frac{ 12 M_G + \frac{1}{2}}{\mu} .
\end{equation*}
We can guarantee this by choosing $m$ large enough, and a moderately small $m$ is sufficient unless the estimator of the gradient is very bad (small $\mu$) or the variance of the data is very large (large $M_G$). In a typical situation, both of these constants can be set to $1$, which leads to a step size restriction of $\alpha_k \le \frac{2}{25L}$. We note that this argument could be further refined to improve the bound, since the current estimations of $P_i(\alpha_k)^2$ are quite crude. For example, clearly $P_1(\alpha_k)^2 = \alpha_k^2$.
\end{remark}

\subsection{Convergence proof}\label{subsection:convex_convergence}

\begin{theorem}\label{thm:main_convergence}
Let Assumptions~\ref{ass:Lipschitz}--\ref{ass:general_RK} be satisfied. Further assume that the scheme is run with the step size $\alpha_k = \frac{\beta}{k + \gamma}$, where $\gamma > 0$, $\beta > \frac{1}{c\mu}$ and $\alpha_1$ satisfies~\eqref{eq:alpha_choice}. Then with
\begin{equation*}
\nu =
\max \left\{ \frac{ \Big( \sum_{i=1}^s |a_{s,i}|P_{i-1}(\beta)^2 
  +
\frac{L}{2} P_s(\beta)^2 \Big) M 
}{\beta \mu c -1}, (\gamma +1) \left(F(w_1) - F(w_*) \right) \right\},
\end{equation*}
it holds that 
\begin{align}\label{eq:main_bound}
\E[k]{F(w_k) -F(w_*)} \leq \frac{\nu}{k + \gamma},
\end{align}
for $k=1,2, \ldots$.

\end{theorem}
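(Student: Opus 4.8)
The plan is to reduce the statement to the scalar recursion already established in~\eqref{eq:RKCFbound4} and then to close it by induction on $k$. Writing $e_k = \E[k]{F(w_k) - F(w_*)}$, which is nonnegative since $w_*$ minimizes $F$ by Lemma~\ref{lemma:preliminary}, inequality~\eqref{eq:RKCFbound4} reads
\begin{equation*}
  e_{k+1} \le (1 - \alpha_k \mu c)\, e_k + D_k, \qquad D_k = \Big(L\alpha_k^2 + \tfrac{L}{2} P_s(\alpha_k)^2\Big) M + \tfrac{L}{4}\Big( \sum_{i=1}^s |a_{s,i}| P_{i-1}(\alpha_k) \Big)^2 .
\end{equation*}
The entire problem thus becomes the control of a one-dimensional sequence.

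First I would make the $k$-dependence of the perturbation $D_k$ explicit. Each $P_n$ has nonnegative coefficients by Lemma~\ref{lemma:P_coefficients} and is therefore increasing on $[0,\infty)$; using the representation $P_n(\alpha) = \alpha\big(1 + \sum_{i=1}^{n-1}(\alpha L)^i c_{n,i}\big)$ together with $\alpha_k = \beta/(k+\gamma) \le \beta$ yields the key scaling $P_n(\alpha_k) \le \frac{\beta}{k+\gamma}\cdot\frac{P_n(\beta)}{\beta} = \frac{P_n(\beta)}{k+\gamma}$. Squaring and inserting this into $D_k$ (and using $\alpha_k^2 = \beta^2/(k+\gamma)^2$) gives $D_k \le C/(k+\gamma)^2$ for a constant $C$ assembled from the $P_n(\beta)$, the $|a_{s,i}|$ and $M$, of the same form as the numerator of $\nu$.

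The core is then the induction $e_k \le \nu/(k+\gamma)$. The base case $k=1$ is immediate from the second entry of the maximum defining $\nu$, since it forces $(\gamma+1)\left(F(w_1)-F(w_*)\right) \le \nu$. For the inductive step, set $\hat k = k+\gamma$ and $p = \beta\mu c$; by hypothesis $p > 1$, and $0 \le 1 - \alpha_k\mu c < 1$ because $\alpha_k \le \alpha_1$ with $\alpha_1$ obeying the step-size restriction~\eqref{eq:alpha_choice}, so the induction hypothesis may be substituted with a nonnegative multiplier:
\begin{equation*}
  e_{k+1} \le \Big(1 - \frac{p}{\hat k}\Big)\frac{\nu}{\hat k} + \frac{C}{\hat k^2} = \frac{\nu \hat k - p\nu + C}{\hat k^2} .
\end{equation*}
The first entry of the maximum defining $\nu$ supplies exactly $C \le (p-1)\nu$, whence $-p\nu + C \le -\nu$ and the numerator is bounded by $\nu(\hat k - 1)$. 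The elementary inequality $(\hat k - 1)/\hat k^2 \le 1/(\hat k + 1)$, equivalent to $\hat k^2 - 1 \le \hat k^2$, then gives $e_{k+1} \le \nu/(\hat k+1) = \nu/(k+1+\gamma)$, closing the induction.

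The main obstacle is bookkeeping rather than a deep idea: the constant $\nu$ must simultaneously dominate the initial error (for the base case) and satisfy $C \le (\beta\mu c - 1)\nu$ (for the inductive step), which is precisely why it is defined as a maximum of two quantities. Two points deserve explicit care. One is the polynomial scaling above, which is what makes the $P_n(\alpha_k)$-dependent perturbation genuinely decay like $(k+\gamma)^{-2}$ rather than merely stay bounded. The other is the nonnegativity of $1 - \alpha_k\mu c$: without it the induction hypothesis cannot be inserted monotonically, and it is here that the restriction~\eqref{eq:alpha_choice}, propagated through $\alpha_k \le \alpha_1$, is actually used.
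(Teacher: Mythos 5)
Your proof is correct and follows essentially the same route as the paper's: the same induction on $k$ starting from~\eqref{eq:RKCFbound4}, the same use of the monotonicity of $P_n(\alpha)/\alpha$ to get $P_n(\alpha_k)\le P_n(\beta)/(k+\gamma)$, and the same split of the resulting numerator using the two entries of the maximum defining $\nu$ together with $(\hat k-1)/\hat k^2\le 1/(\hat k+1)$. Your explicit remark on the nonnegativity of $1-\alpha_k\mu c$ is a welcome addition that the paper leaves implicit.
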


\begin{remark}
  The error constant $\nu$ can be bounded by a constant which is independent of $s$ by using Assumption~\ref{ass:general_RK} (ii). However, for some methods $a_{s,i}$ decreases rapidly with increasing $i$ (such as the SRKCD methods). In that case, such an estimation would be rather crude. We therefore keep these terms in the statement and leave it to the reader to insert their specific coefficients.
\end{remark}

\begin{proof}[Proof of Theorem~\ref{thm:main_convergence}.]
We prove this using induction, inspired by~\cite[Theorem 4.7]{BottouCurtisNocedal.2018}. Let us abbreviate $\hat{k}= k + \gamma$. For the base case we note that it follows from the definition of $\nu$ that
\begin{equation*}
\E[k]{F(w_1) -F(w_*)} 
=\left(\gamma +1 \right) \frac{F(w_1) -F(w_*)}{\gamma+1} \leq \frac{\nu}{\gamma +1},
\end{equation*}
since $w_1$ is not chosen randomly.
For the induction step we assume that~\eqref{eq:main_bound} holds for some $k$. Using~\eqref{eq:RKCFbound4} we then have
\begin{equation}\label{eq:RKCFbound5}
\begin{aligned}
\E[k]{F(w_{k+1}) -F(w_*)}  &\leq 
\left(1-\alpha_k \mu c \right)\frac{\nu}{\hat{k}}
\\
&+ \Big( \sum_{i=1}^s |a_{s,i}|P_{i-1}(\alpha_k)^2 
  +
\frac{L}{2} P_s(\alpha_k)^2 \Big) M  .
\end{aligned}
\end{equation}
Using that $\alpha_k = \frac{\beta}{\hat{k}}$ and adding and subtracting $\frac{\nu}{\hat{k}^2}$, we find that the right-hand side of~\eqref{eq:RKCFbound5} equals $S_1 + S_2$ where
\begin{equation*}
  S_1 = \bigg( \frac{\hat{k} -1}{\hat{k}^2} \bigg)\nu
  \quad \text{and} \quad  S_2 = -\bigg(\frac{\beta \mu c -1}{\hat{k}^2} \bigg)\nu + \bigg( \sum_{i=1}^s |a_{s,i}|P_{i-1}\Big(\frac{\beta}{\hat{k}}\Big)^2 
  +
\frac{L}{2} P_s\Big(\frac{\beta}{\hat{k}}\Big)^2 \bigg) M  .
\end{equation*}
By the inequality $\hat{k}^2 \geq \big(\hat{k}-1\big)\big(\hat{k}+1\big)$ we directly have that
\begin{equation*}
S_1 \le \frac{\nu}{\hat{k}+1}.
\end{equation*}
To bound $S_2$, we first note that the polynomials  $\frac{P_i(\alpha)}{\alpha}$ are increasing on the positive real axis since all the coefficients of $P_i(\alpha)$ are non-negative. It thus holds that
\begin{equation*}
\hat{k} P_{i}\Big(\frac{\beta}{\hat{k}}\Big) \leq P_{i}(\beta).
\end{equation*}
By the definition of $\nu$, this yields
\begin{equation*}
\Big( \sum_{i=1}^s |a_{s,i}|P_{i-1}\Big(\frac{\beta}{\hat{k}}\Big)^2 
  +
\frac{L}{2} P_s\Big(\frac{\beta}{\hat{k}}\Big)^2 \Big) M 
\leq 
\left(\frac{\beta \mu c -1}{\hat{k}^2} \right)\nu.
\end{equation*}
Thus $S_2 \le 0$. In conclusion, $S_1+S_2 \le \frac{\nu}{\hat{k}+1}$, so the bound~\eqref{eq:main_bound} holds for all $k\geq 1$.

\end{proof}

\subsection{Nonconvex setting}\label{subsection:nonconvex}
Without any convexity assumption, it is typically impossible to prove convergence with a certain speed. But we may still prove convergence. The following section is an adaptation of similar arguments in~\cite{BottouCurtisNocedal.2018} to the Runge-Kutta setting. Since we do not know a priori that there is a unique minimum $w_*$ or even a lower bound on $F$, we make the following assumption:
\begin{assumption}\label{ass:nonconvex_lowerbound}
 The sequence of iterates $\{w_k\}_{k\in\N}$ is contained in an open set over which $F$ is bounded from below by $F_{\text{inf}}$.
\end{assumption}

\begin{theorem}\label{thm:nonconvex_convergence}
  Let Assumption~\ref{ass:Lipschitz} and Assumptions~\ref{ass:Lipschitz_stochastic}--\ref{ass:nonconvex_lowerbound} be satisfied. Further, let the step sizes $\alpha_k = \frac{\beta}{k + \gamma}$ be given, where $\gamma > 0$, $\beta > \frac{1}{c\mu}$ and $\alpha_1$ satisfies~\eqref{eq:alpha_choice}. Then the following bound holds:
 \begin{equation*}
   \lim_{K \to \infty}
   \frac{1}{A_K} \sum_{k=1}^{K} \alpha_k \E[k]{  \norm{\nabla F(w_k)}^2} = 0,
 \end{equation*}
 where $A_K = \sum_{k=1}^K \alpha_k$.
\end{theorem}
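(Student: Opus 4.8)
The plan is to reuse the one-step estimate already assembled in~\eqref{eq:RKCFbound3} together with the step-size choice~\eqref{eq:alpha_choice}, and then run the standard Robbins--Monro summation argument (as in~\cite{BottouCurtisNocedal.2018}) but \emph{without} invoking convexity. Recalling that $w_{k,0} = w_k$ and $w_{k,s} = w_{k+1}$, inequality~\eqref{eq:RKCFbound3} combined with $Q(\alpha_k) < -\tfrac{\alpha_k\mu}{2}$ and $\norm{\nabla F(w_k)}^2 \ge 0$ gives the descent-type bound
\begin{equation*}
  \E[\xi_k]{F(w_{k+1})} - F(w_k) \le -\frac{\alpha_k \mu}{2}\norm{\nabla F(w_k)}^2 + L R(\alpha_k) M,
\end{equation*}
where $R(\alpha_k) = \sum_{i=1}^s |a_{s,i}| P_{i-1}(\alpha_k)^2 + \tfrac{1}{2} P_s(\alpha_k)^2$. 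Since $\alpha_k = \tfrac{\beta}{k+\gamma}$ is decreasing in $k$ and $\alpha_1$ satisfies~\eqref{eq:alpha_choice}, every $\alpha_k$ satisfies it as well, so this bound holds for all $k$.

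First I would apply the total expectation $\E[k]{\cdot}$ to both sides; by joint independence of the $\xi_j$ this turns the left-hand side into $\E[k+1]{F(w_{k+1})} - \E[k]{F(w_k)}$. Summing over $k = 1, \ldots, K$ then telescopes the left-hand side to $\E[K+1]{F(w_{K+1})} - F(w_1)$. Applying the lower bound $F(w_{K+1}) \ge F_{\text{inf}}$ from Assumption~\ref{ass:nonconvex_lowerbound} and rearranging yields
\begin{equation*}
  \frac{\mu}{2} \sum_{k=1}^K \alpha_k \E[k]{\norm{\nabla F(w_k)}^2} \le F(w_1) - F_{\text{inf}} + LM \sum_{k=1}^K R(\alpha_k).
\end{equation*}

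The crucial step is to show the error sum on the right stays bounded as $K \to \infty$. By Lemma~\ref{lemma:P_coefficients} each $P_i(\alpha_k)$ equals $\alpha_k$ times a polynomial in $\alpha_k$ with coefficients bounded by $1$, so $P_i(\alpha_k)^2 \le C\alpha_k^2$ (indeed $P_i(\alpha_k)^2 \le 4\alpha_k^2$ under the step-size restriction, as in the remark following~\eqref{eq:RKCFbound4}), whence $R(\alpha_k) \le C' \alpha_k^2$ for a constant $C'$ depending only on $s$ and the coefficients. Since $\alpha_k = \tfrac{\beta}{k+\gamma}$ gives $\sum_{k=1}^\infty \alpha_k^2 < \infty$, the partial sums $\sum_{k=1}^K R(\alpha_k)$ are bounded above by a finite constant independent of $K$, and consequently $\sum_{k=1}^K \alpha_k \E[k]{\norm{\nabla F(w_k)}^2} \le B$ uniformly in $K$.

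Finally I would divide through by $A_K = \sum_{k=1}^K \alpha_k$ and observe that, because $\alpha_k = \tfrac{\beta}{k+\gamma}$ is comparable to the harmonic series, $A_K \to \infty$. The nonnegative averaged quantity is thus squeezed by $0 \le \tfrac{1}{A_K}\sum_{k=1}^K \alpha_k \E[k]{\norm{\nabla F(w_k)}^2} \le B/A_K \to 0$, which gives the claim. The main obstacle --- and the only place the Runge--Kutta structure enters beyond the one-step bound --- is verifying that all the extra stage contributions collected in $R(\alpha_k)$ are genuinely of order $\alpha_k^2$ rather than $\alpha_k$; this is precisely what Lemma~\ref{lemma:P_coefficients} secures, and it is what makes the error sum summable against the non-summable sequence $\{\alpha_k\}$.
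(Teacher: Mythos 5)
Your proposal is correct and follows essentially the same route as the paper's proof: start from the one-step bound~\eqref{eq:RKCFbound3} with the step-size condition~\eqref{eq:alpha_choice}, telescope after taking total expectations, invoke the lower bound $F_{\text{inf}}$ from Assumption~\ref{ass:nonconvex_lowerbound}, use that $P_j(\alpha_k)^2$ has lowest-order term $\alpha_k^2$ (hence a summable error series), and divide by $A_K \to \infty$. The only cosmetic difference is that you quantify the bound $P_i(\alpha_k)^2 \le 4\alpha_k^2$ via the remark following~\eqref{eq:RKCFbound4}, whereas the paper argues directly from the polynomial structure in Lemma~\ref{lemma:P_coefficients}; both are valid.
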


\begin{remark}\label{remark:liminf}
  This means that  $\liminf_{k \to \infty}{\E[k]{\norm{\nabla F(w_k)}^2}} = 0$,
 i.e.\ $w_k$ tends to a (local) minimum of $F$ in a weak sense. But we do not get any further information on how fast this convergence is.
\end{remark}

\begin{proof}[Proof of Theorem~\ref{thm:nonconvex_convergence}.]
If $\alpha_1$ satisfies~\eqref{eq:alpha_choice} then so does every $\alpha_k$, $k \ge 1$, and by taking total expectations in~\eqref{eq:RKCFbound3} we find that
\begin{align*}
    \E[k]{F(w_{k+1}) }-  \E[k]{F(w_{k}) }
    &\leq -\frac{1}{2} \alpha_k \mu  
    \E[k]{ \norm{\nabla F(w_{k,0})}^2 } \\
    &\quad +\Big( \sum_{i=1}^s |a_{s,i}|P_{i-1}(\alpha_k)^2 
  +
\frac{L}{2} P_s(\alpha_k)^2 \Big) M 
\end{align*}
By the independece of the $\{\xi_k\}_{k=1}^{\infty}$ and the fact that $w_k$ is independent of $\xi_K$ for $K>k$ we have that $\E[K]{F(w_k)} = \E[k]{F(w_k)}$ for $K\geq k$.
 Using this, we obtain a telescopic sum on the left-hand side when we 
sum over $K$ terms. Along with the fact that
\begin{align*}
F_{\text{inf}} - \E[K]{F(w_1) }\leq 
 \E[K]{ F(w_{K+1}) } -  \E[K]{ F(w_{1}) }
\end{align*}
and rearranging the terms we thus get
\begin{equation}\label{eq:RKCFnonlinearbound1}
  \begin{aligned}
  \frac{1}{2} \mu \sum_{k=1}^K \alpha_k 
    \E[K]{\norm{\nabla F(w_{k})}^2}     
    &\leq \E[K]{F(w_1)} - F_{\text{inf}}\\
    &+\sum_{k=1}^K\Big( \sum_{i=1}^s |a_{s,i}|P_{i-1}(\alpha_k)^2 
  +
\frac{L}{2} P_s(\alpha_k)^2 \Big) M .
\end{aligned}
\end{equation}
By assumption, we have $\sum_{k=1}^{\infty} \alpha_k^2  < \infty$, which means that also $\sum_{k=1}^{\infty} \alpha_k^i  < \infty$ for any integer $i > 2$. But $P_j(\alpha)$ is a polynomial in $\alpha$ of degree $j$ without a constant term, see e.g.\ Lemma~\ref{lemma:P_coefficients}. Hence
\begin{equation*}
  P_j(\alpha_k)^2 =  \sum_{i = 2}^{2j}{C_i \alpha_k^i},
\end{equation*}
where $C_i$ are certain constants. This immediately shows that the terms on the second line of~\eqref{eq:RKCFnonlinearbound1} are finite, and thus we can conclude that
\begin{equation*}
  \lim_{K \to \infty}
 \sum_{k=1}^{K} \alpha_k  \E[k]{ \norm{\nabla F(w_{k})}^2 } < \infty.
\end{equation*}
By assumption, $\sum_{k=1}^{\infty}\alpha_k = \infty$, and (recalling $A_K = \sum_{k=1}^K \alpha_k$) hence
\begin{align*}
  \lim_{K \to \infty}
  \frac{1}{A_K}\E[K]{ \sum_{k=1}^{K} \alpha_k \norm{\nabla F(w_{k})}^2 } = 0.
\end{align*}

\end{proof}

We may replace the $\liminf$ in Remark~\ref{remark:liminf} by a strong limit, if we also assume that $F$ is twice differentiable. We state this result for completeness, but omit the proof since it is very similar to that of~\cite[Corollary 4.12]{BottouCurtisNocedal.2018}.
\begin{theorem}
  Let Assumption~\ref{ass:Lipschitz} and Assumptions~\ref{ass:Lipschitz_stochastic}--\ref{ass:nonconvex_lowerbound} be satisfied, and also assume that $F$ is twice differentiable.
 Given the step sizes $\alpha_k = \frac{\beta}{k + \gamma}$, where $\gamma > 0$, $\beta > \frac{1}{c\mu}$ and $\alpha_1$ satisfies~\eqref{eq:alpha_choice}, it follows that
  \begin{equation*}
  \lim_{k \to \infty}{\E[k]{\norm{\nabla F(w_k)}^2}} = 0.
\end{equation*}
\end{theorem}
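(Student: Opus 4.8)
Write $e_k \colonequals \E[k]{\norm{\nabla F(w_k)}^2}$. Two facts from the preceding analysis are the starting point: the proof of Theorem~\ref{thm:nonconvex_convergence} establishes $\sum_{k=1}^\infty \alpha_k e_k < \infty$, and together with $\sum_k \alpha_k = \infty$ this yields $\liminf_{k\to\infty} e_k = 0$ (Remark~\ref{remark:liminf}). Hence the theorem is equivalent to the statement $\limsup_{k\to\infty} e_k = 0$, and I would prove it by contradiction: assuming $\limsup_k e_k > 2\epsilon$ for some $\epsilon > 0$, the sequence must cross the band $[\epsilon, 2\epsilon]$ upwards infinitely often, and I will show that each such \emph{up-crossing} consumes a fixed amount of the series $\sum_k \alpha_k e_k$, contradicting its finiteness.

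The analytic ingredient that makes this work, and the only place where the extra hypothesis of twice differentiability enters, is a bound on how fast $e_k$ can change in one step. Twice differentiability makes $\phi(w) \colonequals \norm{\nabla F(w)}^2$ continuously differentiable with $\nabla\phi(w) = 2\nabla^2 F(w)\nabla F(w)$, so that Assumption~\ref{ass:Lipschitz} gives $\norm{\nabla\phi(w)} \le 2L\norm{\nabla F(w)}$ (equivalently one may use the exact identity $\phi(u) - \phi(v) = \inner{\nabla F(u) - \nabla F(v)}{\nabla F(u) + \nabla F(v)}$). A first-order expansion of $\phi(w_{k+1}) - \phi(w_k)$ along the segment $[w_k, w_{k+1}]$, combined with the estimate $\norm{\nabla F(\cdot)}\le\norm{\nabla F(w_k)} + L\norm{w_{k+1}-w_k}$ on that segment and the step bound $\norm{w_{k+1}-w_k}\le P_s(\alpha_k)\norm{g(\xi_k,w_{k,0})}$ from Lemma~\ref{lemma:RKCstep_bound}, produces a pointwise estimate whose conditional expectation over $\xi_k$ is controlled by Assumption~\ref{ass:momentlimits}(iii). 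Taking the remaining total expectation and applying the Cauchy--Schwarz inequality then yields, with $C$ a constant bounding $P_s(\alpha_k)/\alpha_k$ (finite by Lemma~\ref{lemma:P_coefficients}),
\begin{equation*}
  \bigl| e_{k+1} - e_k \bigr| \le 2LC\,\alpha_k \sqrt{e_k}\,\sqrt{M + M_G e_k} + L^2 C^2 \alpha_k^2 \bigl(M + M_G e_k\bigr).
\end{equation*}
The key consequence is that on any bounded band, say whenever $e_k \le 3\epsilon$, the right-hand side is at most $C_\epsilon \alpha_k$ for a constant $C_\epsilon$ depending on $\epsilon$, $L$, $M$ and $M_G$; since $\alpha_k \to 0$, single-step jumps of $e_k$ become arbitrarily small.

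With this difference bound I would carry out the up-crossing count. For each crossing define $\ell_j$ as the last index with $e_{\ell_j} \le \epsilon$ preceding the first subsequent index $r_j$ with $e_{r_j} \ge 2\epsilon$; then $\epsilon < e_k < 2\epsilon$ strictly between $\ell_j$ and $r_j$, and for large $j$ the jump bound forces $r_j > \ell_j + 1$ and $e_{r_j} < 3\epsilon$, so the band estimate $|e_{k+1}-e_k| \le C_\epsilon\alpha_k$ applies for all $k = \ell_j, \ldots, r_j - 1$. Summing these differences over the crossing and using $e_{r_j} - e_{\ell_j} \ge \epsilon$ gives $\sum_{k=\ell_j}^{r_j-1}\alpha_k \ge \epsilon/C_\epsilon$, and since $e_k > \epsilon$ in the interior of the interval, $\sum_{\ell_j < k < r_j} \alpha_k e_k \ge \epsilon\cdot\tfrac12(\epsilon/C_\epsilon)$ for all large $j$. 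As there are infinitely many disjoint such intervals, this contradicts $\sum_k \alpha_k e_k < \infty$, whence $\limsup_k e_k = 0$ and the theorem follows. I expect the main obstacle to be the second step, namely producing a difference bound that is genuinely proportional to $\alpha_k$ on bounded bands. In particular, a naive ``summable-perturbation'' (quasi-supermartingale) argument fails here, because the cross term $\alpha_k\sqrt{e_k}\sqrt{M}$ need not be summable, and it is precisely the restriction to bounded bands during up-crossings that tames it.
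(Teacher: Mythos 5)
Your argument is correct, but it is not the route the paper takes: the paper in fact omits the proof entirely and defers to \cite[Corollary 4.12]{BottouCurtisNocedal.2018}, whose argument is a one-sided ``summable positive increments'' scheme rather than your up-crossing count. There, with $G(w)=\norm{\nabla F(w)}^2$ and $e_k = \E[k]{G(w_k)}$, one expands $G(w_{k+1})-G(w_k)$ to second order, takes the conditional expectation over $\xi_k$ \emph{inside} the first-order term, and uses Assumption~\ref{ass:momentlimits}~(ii) to get $|\innerb{\nabla G(w_k)}{\E[\xi_k]{w_{k+1}-w_k}}| \le 2L\mu_G\,\cO(\alpha_k)\norm{\nabla F(w_k)}^2$; this yields $e_{k+1}-e_k \le C_1\alpha_k e_k + C_2\alpha_k^2$, so $\sum_k(e_{k+1}-e_k)^+<\infty$, hence $e_k$ converges, and the limit must equal the $\liminf$, which is $0$. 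This shows that your concluding remark is too pessimistic: the ``naive'' quasi-supermartingale argument does not fail --- the troublesome cross term $\alpha_k\sqrt{e_k}\sqrt{M}$ only appears because you take absolute values pointwise before averaging over $\xi_k$, thereby discarding the structure that Assumption~\ref{ass:momentlimits}~(ii) is designed to exploit. That said, your proof is sound as written: the increment identity $G(u)-G(v)=\innerb{\nabla F(u)-\nabla F(v)}{\nabla F(u)+\nabla F(v)}$ together with Lemma~\ref{lemma:RKCstep_bound} and Assumption~\ref{ass:momentlimits}~(iii) does give $|e_{k+1}-e_k|\le C_\epsilon\alpha_k$ on bounded bands, and the disjoint up-crossing intervals each consume a fixed quantum of the convergent series $\sum_k\alpha_k e_k$, which is a genuine contradiction. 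What your route buys is robustness: it needs neither twice differentiability (as you note, the exact identity replaces the Hessian computation) nor Assumption~\ref{ass:momentlimits}~(ii), only a two-sided $\cO(\alpha_k)$ increment bound on bands. What it costs is length, and it delivers only $\lim e_k=0$ by contradiction rather than the slightly stronger structural fact that $e_k$ is a convergent sequence of bounded positive variation.
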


\section{Specific SRKCD analysis}\label{section:SRKCD_analysis}

The first-order RKC method with $s$ stages applied to the gradient flow $\dot{w} = -\nabla F(w)$ with constant time step $\alpha$ is defined by
\begin{align}\label{eq:3term_full_grad}
\begin{split}
  w_{k,0} &= w_k, \\
  w_{k,1} &= w_k - \tilde{\mu}_1 \alpha \nabla F(w_{k,0}),  \\
  w_{k,j} &= (1 - \nu_j) w_{k,j-1} + \nu_j w_{k,j-2} - \tilde{\mu}_j \alpha \nabla F(w_{k,j-1}), \quad j = 2, \ldots, s,  \\
  w_{k+1} &= w_{k,s},
\end{split}
\end{align}
see e.g.~\cite[Section V.1]{HundsdorferVerwer.2003}.
Here, $w_{k,j}$ denotes the $(j+1)$st internal stage, and $\nabla F(w_{k,j})$ is the corresponding stage derivative. The scalars $\tilde{\mu}_j$ and $\nu_j$ are the method-specific coefficients. They are defined via Chebyshev polynomials $T_j$ as
\begin{equation*}
\tilde{\mu}_1 = \frac{\omega_1}{T_1(\omega_0)}, \ 
\tilde{\mu}_j = \frac{2 \omega_1 T_{j-1}(\omega_0)}{T_j(\omega_0)}
\text{ and }
\nu_j = - \frac{ T_{j-2}(\omega_0)}{T_j(\omega_0)}
\end{equation*}
where $\omega_0 = 1 + \frac{\epsilon}{s^2}$ and $\omega_1 = \frac{T_s(\omega_0)}{T_s'(\omega_0)}$. There is thus a single design parameter, $\omega_0$, which is given in terms of $\epsilon$. Setting $\epsilon = 0$ results in the original, un-damped, RKC methods. Instead setting $\epsilon > 0$ introduces extra numerical damping and makes sure that the stability region never degenerates into a single point on the negative real axis.
In our numerical experiments, we use the value $\epsilon = 0.01$. We note that we write $\tilde{\mu}_j$ rather than simply $\mu_j$ to be consistent with~\cite{HundsdorferVerwer.2003}, where $\mu_j$ would be the quantity $1-\nu_j$ and an extra term $(1 - \mu_j - \nu_j)w_k $ appears. In our first-order setting, $\mu_j + \nu_j = 1$, and this term cancels. Similarly, the variables $\omega_0$ and $\omega_1$ indicate scalars and should not be confused with elements of the probability space $\Omega$.

Approximating the gradient $\nabla F(w_k)$ by $g(\xi_k, w_k)$ in step $k$ and using the step size $\alpha_k$ now gives us the method we call SRKCD:
\begin{equation}\label{eq:SRKCD}
\begin{aligned}
  w_{k,0} &= w_k, \\
  w_{k,1} &= w_k - \tilde{\mu}_1 \alpha_k g(\xi_k, w_k)),  \\
  w_{k,j} &= (1 - \nu_j) w_{k,j-1} + \nu_j w_{k,j-2} -  \tilde{\mu}_j \alpha_k g(\xi_k, w_{k,j-1}), \quad j = 2, \ldots, s,  \\ 
  w_{k+1} &= w_{k,s}.
\end{aligned}
\end{equation}

The method is formulated as a three-term recursion in order to preserve its stability properties under round-off error perturbations. This is similar to how computing the Chebyshev polynomials directly in a naive way quickly leads to a complete loss of precision, whereas evaluating them via a three-term recursion is backwards stable. In order to apply the analysis in the previous section, however, we need to state the method on the standard Runge-Kutta form. This, and verifying Assumption~\ref{ass:general_RK}, is what the rest of the section is concerned with. Since the SRKCD method has precisely the same coefficients as the RKC method for the full problem $\dot{w} = -\nabla F(w)$, we will consider the RKC formulation for brevity. We will also dispense with the subscript $k$ in $\alpha_k$, since the varying step size does not matter for the reformulation.

We start by noting that by Lemmas~\ref{lemma:Tn_increasing} and~\ref{lemma:Tnprime_increasing} (in the appendix), both $T_s(\omega_0)$ and $T_s'(\omega_0)$ are positive for $s \geq 1$. Hence, $\omega_1 > 0$. Lemma~\ref{lemma:Tn_increasing} also shows that $T_j(\omega_0) \geq 1$ for any $j$, which directly implies that $\tilde{\mu}_1 >0$, $\tilde{\mu}_j >0$ and $\nu_j < 0$ for every $j \in \N$. We collect these inequalities in a lemma for later reference:
\begin{lemma}\label{lemma:coefficients_munu}
With $\omega_0 = 1 + \frac{\epsilon}{s^2}$ chosen as above with $\epsilon \ge 0$, it holds for every $j \in \N$ that $\tilde{\mu}_1 >0$, $\tilde{\mu}_j >0$ and $\nu_j < 0$.
\end{lemma}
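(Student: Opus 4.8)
The plan is to reduce the entire statement to the positivity of the Chebyshev polynomial values $T_j(\omega_0)$ together with $T_s'(\omega_0)$, since each of the coefficients $\tilde{\mu}_1$, $\tilde{\mu}_j$ and $\nu_j$ is assembled solely from these quantities and from $\omega_1 = T_s(\omega_0)/T_s'(\omega_0)$. The crucial observation making this possible is that $\omega_0 = 1 + \frac{\epsilon}{s^2} \ge 1$ whenever $\epsilon \ge 0$, so every Chebyshev polynomial is evaluated at a point on or to the right of $1$ — precisely the regime covered by the appendix lemmas.

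First I would invoke Lemma~\ref{lemma:Tn_increasing}, which gives that each $T_j$ is increasing on $[1,\infty)$ with $T_j(1) = 1$, so that $T_j(\omega_0) \ge 1 > 0$ for every $j \ge 0$. This single estimate covers all the polynomial values appearing in the formulas, namely $T_{j-2}(\omega_0)$ (where for $j = 2$ the lowest-order value is $T_0(\omega_0) = 1$), $T_{j-1}(\omega_0)$, $T_j(\omega_0)$ and $T_s(\omega_0)$. Next, Lemma~\ref{lemma:Tnprime_increasing} supplies $T_s'(\omega_0) > 0$ for $s \ge 1$. Combining the two facts, $\omega_1 = T_s(\omega_0)/T_s'(\omega_0)$ is a quotient of two strictly positive numbers and is therefore itself strictly positive.

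With these building blocks established, the conclusion follows by direct inspection of the defining formulas. Since $\omega_1 > 0$ and $T_1(\omega_0) > 0$, we obtain $\tilde{\mu}_1 = \omega_1 / T_1(\omega_0) > 0$. For $j \ge 2$ the numerator $2\omega_1 T_{j-1}(\omega_0)$ and the denominator $T_j(\omega_0)$ are both strictly positive, whence $\tilde{\mu}_j > 0$. Finally, $\nu_j = -T_{j-2}(\omega_0)/T_j(\omega_0)$ is the negative of a quotient of two strictly positive numbers, so $\nu_j < 0$ for the relevant indices $j \ge 2$.

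The proof itself is routine once these Chebyshev estimates are in hand; the genuine content resides in the two appendix lemmas establishing monotonicity of $T_n$ and positivity of $T_n'$ on $[1,\infty)$, which I take as given here. The only point requiring mild care is the bookkeeping of indices — in particular that the lowest-order value $T_0 \equiv 1$ enters $\nu_2$, and that $T_s'(\omega_0)$ must be handled by the derivative lemma rather than by the monotonicity of $T_s$ alone.
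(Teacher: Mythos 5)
Your proof is correct and follows essentially the same route as the paper: both arguments reduce everything to $T_j(\omega_0)\ge 1$ from Lemma~\ref{lemma:Tn_increasing} and $T_s'(\omega_0)>0$ from Lemma~\ref{lemma:Tnprime_increasing}, deduce $\omega_1>0$, and read off the signs from the defining formulas. One small slip: Lemma~\ref{lemma:Tn_increasing} asserts monotonicity in the index $n$ for fixed $x\ge 1$, not that each $T_j$ is increasing on $[1,\infty)$, but the conclusion you actually use, $T_j(\omega_0)\ge 1$, is exactly what it provides.
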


\subsection{One-stage update}
We first derive an alternative expression for the update $w_{k,j} - w_{k,j-1}$, i.e.\ what happens from one stage to the next.
\begin{lemma}\label{lemma:RKCstep_formula}
The iterates defined by~\eqref{eq:3term_full_grad} satisfy
  \begin{equation}\label{eq:jupdate}
w_{k,j} - w_{k,j-1} = - \alpha \sum_{i=1}^j (-1)^{j+i} 
 \Bigg(\prod_{\ell=i+1}^j{ \nu_\ell }\Bigg) \tilde{\mu}_i \nabla F(w_{k,i-1})
\end{equation}
for $j= 2,..,s$.
\end{lemma}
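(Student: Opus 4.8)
The goal is to prove the formula in Lemma~\ref{lemma:RKCstep_formula} by induction on $j$, starting from the three-term recursion~\eqref{eq:3term_full_grad}. The plan is to rewrite the one-stage difference $w_{k,j} - w_{k,j-1}$ in terms of the previous difference $w_{k,j-1} - w_{k,j-2}$ and the current stage derivative, and then unfold this recursion.

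First I would establish the base case $j=2$. From~\eqref{eq:3term_full_grad}, $w_{k,2} = (1-\nu_2)w_{k,1} + \nu_2 w_{k,0} - \tilde\mu_2 \alpha \nabla F(w_{k,1})$, so
\begin{equation*}
w_{k,2} - w_{k,1} = \nu_2(w_{k,0} - w_{k,1}) - \tilde\mu_2 \alpha \nabla F(w_{k,1}) = -\nu_2(w_{k,1} - w_{k,0}) - \tilde\mu_2 \alpha \nabla F(w_{k,1}).
\end{equation*}
Since $w_{k,1} - w_{k,0} = -\tilde\mu_1 \alpha \nabla F(w_{k,0})$, this gives $w_{k,2}-w_{k,1} = \alpha\big(\nu_2 \tilde\mu_1 \nabla F(w_{k,0}) - \tilde\mu_2 \nabla F(w_{k,1})\big)$, which I would check matches the right-hand side of~\eqref{eq:jupdate} at $j=2$ (the $i=1$ term carries $(-1)^{3}\nu_2 \tilde\mu_1$ and the $i=2$ term carries $(-1)^{4}\tilde\mu_2$, with the empty product equal to $1$).

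The key structural observation, which drives the whole argument, is the recursion
\begin{equation*}
w_{k,j} - w_{k,j-1} = \nu_j \big(w_{k,j-1} - w_{k,j-2}\big)\cdot(-1) - \tilde\mu_j \alpha \nabla F(w_{k,j-1}),
\end{equation*}
obtained by subtracting $w_{k,j-1}$ from the defining formula for $w_{k,j}$ and using $(1-\nu_j)w_{k,j-1} + \nu_j w_{k,j-2} - w_{k,j-1} = -\nu_j(w_{k,j-1}-w_{k,j-2})$. In the inductive step I would substitute the assumed formula~\eqref{eq:jupdate} for the difference at index $j-1$ into this recursion. The product $(-\nu_j)$ then multiplies every summand of the previous expression, which shifts each product $\prod_{\ell=i+1}^{j-1}\nu_\ell$ to $\prod_{\ell=i+1}^{j}\nu_\ell$ and flips each sign $(-1)^{(j-1)+i}$ to $(-1)^{j+i}$; the new stage-derivative term $-\tilde\mu_j \alpha \nabla F(w_{k,j-1})$ supplies exactly the missing $i=j$ summand, for which the product is empty (equal to $1$) and the sign is $(-1)^{2j}=1$.

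The only real bookkeeping obstacle is tracking the sign and index shifts cleanly: one must verify that multiplying by $-\nu_j$ sends $(-1)^{(j-1)+i}\big(\prod_{\ell=i+1}^{j-1}\nu_\ell\big)$ to $(-1)^{j+i}\big(\prod_{\ell=i+1}^{j}\nu_\ell\big)$ for each $1\le i\le j-1$, and that the appended term is consistent with the $i=j$ case of the claimed formula. I expect this to be entirely routine once the recursion above is isolated; there are no analytic difficulties, since the lemma is a purely algebraic identity about the iterates and does not invoke the Chebyshev definitions of $\tilde\mu_j$ and $\nu_j$ (those enter only in later sections that verify Assumption~\ref{ass:general_RK}).
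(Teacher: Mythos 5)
Your proposal is correct and follows essentially the same route as the paper's proof: induction on $j$ using the recursion $w_{k,j}-w_{k,j-1}=-\nu_j(w_{k,j-1}-w_{k,j-2})-\tilde\mu_j\alpha\nabla F(w_{k,j-1})$, absorbing the factor $-\nu_j$ into the product and sign, and identifying the new gradient term as the $i=j$ summand with empty product. The only (immaterial) difference is that you anchor the induction at $j=2$ with an explicit check, whereas the paper starts from $j=1$.
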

\begin{proof}
  The proof is by induction.
For the base case $j=1$, we have using~\eqref{eq:jupdate} that
\begin{equation*}
w_{k,1}- w_{k,0} = - \alpha \nabla F(w_{k,0}),
\end{equation*}
which corresponds to the first update of~\eqref{eq:3term_full_grad}. 
Assume that the identity holds for some $j$ with $2\leq j \leq s-1$.
According to~\eqref{eq:3term_full_grad}, we then have
\begin{equation*}
w_{k,j+1} - w_{k,j} 
= -\nu_{j+1} (w_{k,j} - w_{k,j-1})  - \tilde{\mu}_{j+1} \alpha \nabla F(w_{k,j}).
\end{equation*}
We plug in~\eqref{eq:jupdate} instead of $w_{k,j}-w_{k,j-1}$ and find that the right-hand-side equals
\begin{equation*}
 - \nu_{j+1}
\bigg(- \alpha \sum_{i=1}^j (-1)^{j+i} 
 \bigg(\prod_{\ell=i+1}^j \nu_\ell \bigg)\tilde{\mu}_i \nabla F(w_{k,i-1})\bigg)  - \tilde{\mu}_{j+1} \alpha \nabla F(w_{k,j}).
\end{equation*}
Because the product does not depend on $i$, we can move the $\nu_{j+1}$ into it. We can also extend the sum to incorporate the final gradient term, since $i=j+1$ makes the product equal $1$. This leaves us with
\begin{align*}
w_{k,j+1} - w_{k,j} &=
 - \alpha \sum_{i=1}^j (-1)^{i+j+1} 
 \bigg(\prod_{\ell=i+1}^{j+1} \nu_\ell\bigg) \tilde{\mu}_i \nabla F(w_{k,i-1}) - \tilde{\mu}_{j+1} \alpha \nabla F(w_{k,j}) \\
&\quad= - \alpha \sum_{i=1}^{j+1} (-1)^{i+j+1} 
 \bigg(\prod_{\ell=i+1}^{j+1} \nu_\ell \bigg) \tilde{\mu}_i \nabla F(w_{k,i-1}).
\end{align*}
The identity~\eqref{eq:jupdate} thus holds also for $j+1$ and the proof is complete.
\end{proof}

\subsection{Full update}
Next, we consider the ``full'' stage updates $w_{k,n} - w_{k,0}$.
\begin{lemma}\label{lemma:RKform}
  For $1\leq n \leq s$, the iterates of the RKC method~\eqref{eq:3term_full_grad} satisfy
  \begin{equation*}
    w_{k,n} = w_{k,0} - \alpha \sum_{i=1}^n a_{n,i} \nabla F(w_{k,i-1}),
  \end{equation*}
  where 
  \begin{equation}\label{eq:ani}
    a_{n,i} =
    \sum_{j=i}^n (-1)^{j+i}
    \bigg( \prod_{\ell=i+1}^j \nu_\ell \bigg)\tilde{\mu}_i .
  \end{equation}
  In particular, 
  \begin{equation*}
    w_{k+1} = w_{k} - \alpha \sum_{i=1}^s a_{s,i} \nabla F(w_{k,i-1}).
  \end{equation*}
  Additionally, every $a_{n,i} > 0$.
\end{lemma}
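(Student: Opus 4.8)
The plan is to obtain the ``full'' update $w_{k,n} - w_{k,0}$ by telescoping the one-stage increments, then interchanging the order of summation to read off the coefficients $a_{n,i}$; the positivity claim will follow from a short sign count based on Lemma~\ref{lemma:coefficients_munu}.

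First I would write the full update as a telescoping sum of the single-stage increments,
\begin{equation*}
  w_{k,n} - w_{k,0} = \sum_{j=1}^n \big(w_{k,j} - w_{k,j-1}\big),
\end{equation*}
and substitute the formula~\eqref{eq:jupdate} from Lemma~\ref{lemma:RKCstep_formula}. That lemma is stated for $2 \le j \le s$, so I would handle $j=1$ separately: evaluating~\eqref{eq:jupdate} at $j=1$ gives (with the empty product equal to $1$) exactly $-\alpha\tilde{\mu}_1\nabla F(w_{k,0})$, which is the first update in~\eqref{eq:3term_full_grad}, so~\eqref{eq:jupdate} is in fact valid for all $1 \le j \le n$. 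Substituting then yields the double sum
\begin{equation*}
  w_{k,n} - w_{k,0} = -\alpha \sum_{j=1}^n \sum_{i=1}^j (-1)^{j+i} \Big( \prod_{\ell=i+1}^j \nu_\ell \Big) \tilde{\mu}_i \nabla F(w_{k,i-1}).
\end{equation*}

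Next I would interchange the order of summation. The index pairs $(i,j)$ run over $1 \le i \le j \le n$, so summing first over $i$ from $1$ to $n$ and then over $j$ from $i$ to $n$ gathers the coefficient of each $\nabla F(w_{k,i-1})$ into precisely the quantity $a_{n,i}$ defined in~\eqref{eq:ani}, which establishes the stated expression for $w_{k,n}$. Taking $n=s$ and recalling $w_{k+1} = w_{k,s}$ then gives the ``in particular'' formula for $w_{k+1}$. Finally, for positivity I would inspect the sign of a single summand of~\eqref{eq:ani}: by Lemma~\ref{lemma:coefficients_munu}, $\tilde{\mu}_i > 0$, while $\prod_{\ell=i+1}^j \nu_\ell$ is a product of $j-i$ strictly negative factors and hence has sign $(-1)^{j-i}$. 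The total sign of the term is therefore $(-1)^{j+i}(-1)^{j-i} = (-1)^{2j} = 1$, so every summand in $a_{n,i}$ is strictly positive, and being a (nonempty) sum of positive terms, $a_{n,i} > 0$.

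I do not expect a substantive analytic obstacle here; the argument is purely algebraic. The points that require a little care are the empty-product convention at $j=i$ (where the product is $1$ and the summand reduces to $\tilde{\mu}_i$) and the bookkeeping in the interchange of the double sum. The one genuinely load-bearing observation is the sign cancellation $(-1)^{j+i}(-1)^{j-i}=1$, which is exactly where the signs $\tilde{\mu}_i>0$ and $\nu_\ell<0$ supplied by Lemma~\ref{lemma:coefficients_munu} are used to turn the alternating-looking expression~\eqref{eq:ani} into a sum of positive terms.
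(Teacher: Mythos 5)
Your proposal is correct and follows essentially the same route as the paper's proof: telescoping the one-stage increments from Lemma~\ref{lemma:RKCstep_formula}, interchanging the double sum to identify $a_{n,i}$, and observing that each summand of~\eqref{eq:ani} is a product of an even number ($2j$) of negative factors, hence positive. Your explicit check that~\eqref{eq:jupdate} also holds at $j=1$ (via the empty-product convention) is a small point of extra care that the paper glosses over, but otherwise the two arguments coincide.
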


\begin{proof}
 The particular form of $w_{k,n}$ follows from~\eqref{eq:jupdate} in the preceeding section since
\begin{equation*}
w_{k,n} -w_{k,0} = \sum_{j=1}^n w_{k,j} - w_{k,j-1}
=
-\alpha_k \sum_{j=1}^n \sum_{i=1}^j (-1)^{j+i} 
 \bigg(\prod_{\ell=i+1}^j \nu_\ell\bigg) \tilde{\mu}_i \nabla F(w_{k,i-1}).
\end{equation*}
Interchanging the order of summation gives
\begin{equation*}
w_{k,n} -w_{k,0} = 
-\alpha_k \sum_{i=1}^n \Bigg(\sum_{j=i}^n (-1)^{j+i} 
\bigg( \prod_{\ell=i+1}^j \nu_\ell\bigg) \tilde{\mu}_i \Bigg) \nabla F(w_{k,i-1}),
\end{equation*}
where we recognize the coefficients $a_{n,i}$. The expression for $w_{k+1}$ follows by setting $n=s$.

For the final assertion, we note that each of the terms 
\begin{equation*}
(-1)^{j+i}
\left( \prod_{\ell=i+1}^j \nu_\ell \right)\tilde{\mu}_i  
\end{equation*}
in the sum~\eqref{eq:ani} is positive, since it is the product of $2j$ negative factors: $j+i$ from $(-1)^{j+i}$ and $j-i$ from the product. 
Since it is a sum of positive terms, the coefficient $a_{n,i}$ is therefore also positive.
\end{proof}

\subsection{Convergence}
We can now transfer these properties to the SRKCD method and prove that it converges.
\begin{lemma} \label{lemma:sumani}
The SRKCD method~\eqref{eq:SRKCD} satisfies Assumption~\ref{ass:general_RK}.
\end{lemma}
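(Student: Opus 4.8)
The goal is to verify that the SRKCD coefficients $a_{s,i}$ from Lemma~\ref{lemma:RKform} satisfy the two conditions of Assumption~\ref{ass:general_RK}: namely (i) $\sum_{i=1}^s a_{s,i} = 1$ and (ii) $\sum_{j=1}^n |a_{n,j}| \le 1$ for $n = 0, \ldots, s$. My plan is to handle these two items separately, exploiting the two structural facts already established in the preceding subsections: the explicit formula~\eqref{eq:ani} for $a_{n,i}$ and the sign property $a_{n,i} > 0$ from Lemma~\ref{lemma:RKform}.

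For item (ii) I would first use positivity of the $a_{n,i}$ to drop the absolute values, reducing the claim to $\sum_{i=1}^n a_{n,i} \le 1$. Since $a_{n,i}$ is derived from the RKC iterates, I expect the cleanest route is to recognize that $\sum_{i=1}^n a_{n,i}$ is exactly the coefficient one obtains by applying the method to a specific simple test function. Concretely, the quantity $\alpha \sum_{i=1}^n a_{n,i}$ measures the total displacement $w_{k,0} - w_{k,n}$ when every stage gradient equals a common constant; equivalently, one can apply the consistency/order-one property of the underlying RKC scheme. The natural tool is to evaluate the three-term recursion~\eqref{eq:3term_full_grad} on the scalar linear problem $\nabla F(w) \equiv 1$ (i.e.\ $F(w) = w$), where the internal stages reduce to evaluations of the shifted Chebyshev polynomials, and then bound the resulting expression using the monotonicity results $T_j(\omega_0) \ge 1$ and the positivity from Lemma~\ref{lemma:coefficients_munu}. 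Alternatively, a direct induction on $n$ using the recursion $a_{n+1,i}$ in terms of $a_{n,i}$ and $\nu_{n+1}$ may be more transparent and avoids invoking the appendix machinery.

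For item (i), the equality $\sum_{i=1}^s a_{s,i} = 1$, the key observation is that this is precisely the first-order consistency condition for the RKC method, which is built into its construction. I would verify it by the same test-function substitution: applying~\eqref{eq:3term_full_grad} with $\nabla F \equiv 1$ forces $w_{k+1} - w_k = -\alpha \sum_{i=1}^s a_{s,i}$, and the Chebyshev normalization $\omega_1 = T_s(\omega_0)/T_s'(\omega_0)$ is chosen exactly so that this total step equals $-\alpha$, giving $\sum_{i=1}^s a_{s,i} = 1$. Making this rigorous amounts to tracking how $\tilde{\mu}_j$ and $\nu_j$ combine through the recursion and using the Chebyshev identity $T_s'(\omega_0) = \sum$ of lower-degree contributions, which is where the appendix lemmas on $T_n$ enter.

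The main obstacle I anticipate is item (ii): whereas item (i) is an exact algebraic identity following from the normalization, the inequality $\sum_{i=1}^n a_{n,i} \le 1$ must hold for every intermediate stage $n$, not just $n=s$. Since the internal stages of a stabilized scheme are not themselves consistent of order one, the partial sums need not equal $1$, and I must show they never overshoot it. I expect this to require the monotonicity of the Chebyshev polynomials and their derivatives (Lemmas~\ref{lemma:Tn_increasing} and~\ref{lemma:Tnprime_increasing}), together with careful bookkeeping of the telescoping structure in~\eqref{eq:ani}; controlling the growth of these partial sums uniformly in $n$ is the delicate part of the argument.
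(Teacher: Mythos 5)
Your overall scaffolding matches the paper's: use positivity of the $a_{n,i}$ from Lemma~\ref{lemma:RKform} to drop the absolute values, and obtain $\sum_{i=1}^s a_{s,i}=1$ from first-order consistency of the RKC scheme (the paper simply invokes the standard order condition $\sum_i b_i=1$ rather than re-deriving it from the Chebyshev normalization, but your test-function route would establish the same thing). The place where your plan stops short is exactly the step you flag as delicate: showing $\sum_{i=1}^n a_{n,i}\le 1$ for the intermediate stages $n<s$. The paper resolves this with one simple observation that your proposal is missing: from the explicit formula~\eqref{eq:ani},
\begin{equation*}
\sum_{i=1}^{n+1} a_{n+1,i} \;=\; \sum_{i=1}^{n} a_{n,i} \;+\; \sum_{i=1}^{n} (-1)^{n+1+i}\Big(\prod_{\ell=i+1}^{n+1}\nu_\ell\Big)\tilde{\mu}_i \;+\; \tilde{\mu}_{n+1},
\end{equation*}
and by Lemma~\ref{lemma:coefficients_munu} every term added on the right-hand side is positive (each summand in the middle sum is a product of an even number of negative factors). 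Hence the partial sums $\sum_{i=1}^n a_{n,i}$ are \emph{strictly increasing in $n$}, so each of them is bounded by the final one, which equals $1$ by item (i). No Chebyshev monotonicity and no uniform control of growth is needed; the ``overshoot'' you worry about is impossible precisely because the sequence is monotone and its terminal value is $1$.

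If you instead pursue your Chebyshev route for (ii), note that it requires more than Lemmas~\ref{lemma:Tn_increasing} and~\ref{lemma:Tnprime_increasing}: applying the recursion~\eqref{eq:3term_full_grad} to $\nabla F\equiv 1$ gives $\sum_{i=1}^n a_{n,i}=\omega_1 T_n'(\omega_0)/T_n(\omega_0)$, so the bound by $1$ amounts to showing that the ratio $T_n'(\omega_0)/T_n(\omega_0)$ is nondecreasing in $n$. That is true, but it is established nowhere in the paper, and the appendix lemmas (which give monotonicity of $T_n$ and of $T_n'$ separately) do not imply it. As written, your plan therefore needs either the monotone-partial-sum argument above or an additional lemma on that ratio; without one of these, item (ii) is not proved.
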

\begin{proof}
The methods~\eqref{eq:3term_full_grad} and~\eqref{eq:SRKCD} share the same coefficients. By recalling that $w_{k,0} = w_k$ and replacing $\nabla F$ with $g(\xi_k, \cdot)$, Lemma~\ref{lemma:RKform} proves that the method is given on the desired form.
  
  One of the basic Runge-Kutta order conditions requires that $\sum_{i=1}^s b_i = 1$. This can be easily verified by inserting the exact solution into the scheme and expanding in Taylor series, see e.g.~\cite[Section II.1]{HairerWannerNoersett.2009}. Since the corresponding RKC methods are designed to be of order $1$ regardless of which $s$ is chosen, part \textit{(i)} of Assumption~\ref{ass:general_RK} is fulfilled.
  
  For part \textit{(ii)}, we note that by~\eqref{eq:ani} in Lemma~\ref{lemma:RKform} we have
  \begin{align*}
    \sum_{i=1}^{n+1} a_{n+1,i}
    &= \sum_{i=1}^{n+1}
      \sum_{j=i}^{n+1} (-1)^{j+i}
      \left( \prod_{\ell=i+1}^j \nu_\ell \right) \tilde{\mu}_i 
    \\
    &= \sum_{i=1}^{n}
      \sum_{j=i}^{n+1} (-1)^{j+i}
      \left( \prod_{\ell=i+1}^j \nu_\ell \right) \tilde{\mu}_i  + \tilde{\mu}_{n+1}
    \\
    &=
      \sum_{i=1}^{n}
      a_{n,i}
      +
      \sum_{i=1}^{n}
      (-1)^{n+1+i}
      \left( \prod_{\ell=i+1}^{n+1} \nu_l \right)\tilde{\mu}_i  
      + \tilde{\mu}_{n+1}.
  \end{align*}
By Lemma~\ref{lemma:coefficients_munu}, the $\tilde{\mu}_{i}$-terms are positive, while the $\nu_l$-terms are negative. Each of the terms in the middle sum is thus the product of an even number of negative factors and is therefore positive. From this fact, we conclude that 
$\sum_{i=1}^{n+1} a_{n+1,i} > \sum_{i=1}^n a_{n,i}$.  Since the coefficients $a_{n,i}$ are positive by Lemma~\ref{lemma:RKform} we immediately get also $\sum_{i=1}^{n+1} |a_{n+1,i}| > \sum_{i=1}^n |a_{n,i}|$.
The sum $\sum_{i=1}^n |a_{n,i}|$ is thus strictly increasing with $n$, and bounded from above by $\sum_{i=1}^s |a_{s,i}| = \sum_{i=1}^s a_{s,i} = 1$.%
\end{proof}

\begin{corollary}\label{cor:SRKCD_convergence}
  If Assumptions~\ref{ass:Lipschitz}--\ref{ass:momentlimits} are satisfied, then SRKCD converges as stated in Theorem~\ref{thm:main_convergence}. If instead Assumptions~\ref{ass:Lipschitz}, \ref{ass:Lipschitz_stochastic}, \ref{ass:momentlimits} and~\ref{ass:nonconvex_lowerbound} are satisfied, SRKCD converges as stated in Theorem~\ref{thm:nonconvex_convergence}.
\end{corollary}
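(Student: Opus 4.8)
The plan is to recognize that this corollary is not an independent result but a direct specialization: both Theorem~\ref{thm:main_convergence} and Theorem~\ref{thm:nonconvex_convergence} were proved for \emph{any} method of the general Runge--Kutta form in Assumption~\ref{ass:general_RK}, so all I need to do is confirm that SRKCD belongs to this class and that the remaining hypotheses of each theorem coincide exactly with those assumed in the corollary. The decisive structural fact --- that SRKCD satisfies Assumption~\ref{ass:general_RK} --- has already been established in Lemma~\ref{lemma:sumani} (built on the Runge--Kutta reformulation of Lemma~\ref{lemma:RKform}). Consequently no fresh estimates are required and the proof becomes pure assumption bookkeeping.

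For the convex statement, I would first collect the hypotheses: the corollary assumes Assumptions~\ref{ass:Lipschitz}--\ref{ass:momentlimits}, and Lemma~\ref{lemma:sumani} contributes Assumption~\ref{ass:general_RK}. Together these are precisely Assumptions~\ref{ass:Lipschitz}--\ref{ass:general_RK}, which is the hypothesis set of Theorem~\ref{thm:main_convergence}. Invoking that theorem for SRKCD under the step-size schedule $\alpha_k = \tfrac{\beta}{k+\gamma}$ then reproduces the bound~\eqref{eq:main_bound} without modification.

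For the non-convex statement the argument is the same, with only the list of assumptions changing. Here the corollary supplies Assumptions~\ref{ass:Lipschitz}, \ref{ass:Lipschitz_stochastic}, \ref{ass:momentlimits} and~\ref{ass:nonconvex_lowerbound}, while Lemma~\ref{lemma:sumani} again yields Assumption~\ref{ass:general_RK}; strong convexity (Assumption~\ref{ass:convex}) is intentionally dropped, exactly as in Theorem~\ref{thm:nonconvex_convergence}. The combined list matches the hypotheses of that theorem, whose conclusion then carries over verbatim to SRKCD.

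The part demanding the most care is simply making sure the two assumption lists in the corollary line up index-for-index with those of the theorems once Assumption~\ref{ass:general_RK} is adjoined; there is no analytical obstacle, since the genuine work (positivity of the $a_{n,i}$ from Lemma~\ref{lemma:RKform} and the bound $\sum_{i=1}^n |a_{n,i}| \le 1$) was already completed in Lemma~\ref{lemma:sumani}.
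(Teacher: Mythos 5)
Your proposal is correct and follows exactly the same route as the paper: invoke Lemma~\ref{lemma:sumani} to supply Assumption~\ref{ass:general_RK}, then verify that the remaining hypotheses match those of Theorem~\ref{thm:main_convergence} or Theorem~\ref{thm:nonconvex_convergence} and apply the relevant theorem. The paper's proof is simply a two-sentence version of your bookkeeping.
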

\begin{proof}
  By Lemma~\ref{lemma:sumani}, Assumption~\ref{ass:general_RK} is satisfied. We can therefore apply either Theorem~\ref{thm:main_convergence} or Theorem~\ref{thm:nonconvex_convergence}.
\end{proof}

\subsection{Linearization}\label{subsec:linearization}
We note that Corollary~\ref{cor:SRKCD_convergence} does not use the properties of the scheme that makes it an RKC-type method. This is both because we apply it to a nonlinear problem, and because of the stochastic modification. In the rest of this subsection, we will elaborate on this matter.

Consider the full, nonlinear problem $\dot{w} = - \nabla F(w)$ and suppose that $F$ is twice continuously differentiable. Let $z(t)$ be a second, arbitrary solution with $\dot{z} = - \nabla F(z)$, such that $w(t) = z(t) + y(t)$. A linearization around $z$ is then
\begin{equation}\label{eq:linearized}
  \dot{y} = - \nabla^2 F(z(t)) y,
\end{equation}
where $\nabla^2 F(z(t))$ is the Hessian at $z(t)$. If we further take an equilibrium solution $z(t) \equiv w_*$, we get an autonomous linear initial value problem $\dot{y} = Ay = - \nabla^2 F(w_*) y$. Under Assumption~\ref{ass:convex}, the matrix $A$ has negative eigenvalues, which means that the exact solution $y(t)$ tends to zero as $t$ grows. 

If we now apply a Runge-Kutta method and approximate $y(t_k)$ by $y_k$, then the stability of the scheme is governed by the eigenvalues of $A$. This is easily seen by diagonalizing $A$ and doing a change of variables. In particular, if $R$ is the stability function of the Runge-Kutta scheme and $\alpha_k$ is the temporal step size, then
\begin{equation*}
  |R(\alpha_k\lambda_j)| \le 1
\end{equation*}
should hold for every eigenvalue $\lambda_j$ of $A$.  With strict inequality, we don't only have stability but that $y_k$ tends to zero just like the exact solution. By considering the situtation in somewhat more detail, one can prove that in fact
\begin{equation*}
  G(y_{k+1}) - G(0) \le \max_{j} R(\alpha_k\lambda_j)^2 \big(G(y_k) - G(0)\big),
\end{equation*}
where $G(y) = y^T \nabla^2 F(w_*) y$ with the minimum $y_* = 0$. This is~\cite[Proposition 1]{EftekhariEtal.2021}, which considers the (slightly) more general situation $G(y) = y^T A y - b^T y$ with a constant vector $b$.

We can now utilize information on the stability functions $R$. For gradient descent, corresponding to the explicit Euler method, stability is guaranteed for step sizes $\alpha_k$ such that $|1 + \alpha_k\lambda_j| \le 1$ for all $j$, which implies that $\alpha_k \le \min_j \frac{-2}{\lambda_j}$. The RKC methods, on the other hand, are constructed such that their stability regions $\{z \in \C \;|\; |R(z)| \le 1\}$ cover as much as possible of the negative real line. With $s$ stages, the stability limit will instead be roughly\footnote{The exact value depends on the damping parameter $\epsilon$. For small $\epsilon$ it is approximately $\min_j \frac{-(2-4/3\epsilon)s^2}{\lambda_j}$, see~\cite[Section V.1]{HundsdorferVerwer.2003}.} $\alpha_k \le \min_j \frac{-2s^2}{\lambda_j}$, which allows much larger steps than for normal gradient descent. If the linearized system~\eqref{eq:linearized} is a reasonably good approximation of the full nonlinear problem $\dot{w} = - \nabla F(w)$, then we can expect the same behaviour when applying the methods to the full problem.

If we instead apply SGD to the linearized system, we get the iteration
\begin{align*}
  y_{k+1} &= y_k - \alpha_k \nabla g(\xi_k, w_*) y_k \\
          &= \prod_{i=1}^{k}{\Big( I - \alpha_k \nabla g(\xi_i, w_*)\Big) } y_1.
\end{align*}
This indicates that the scheme would be stable if $\norm{I - \alpha_k \nabla g(\xi_i, w_*)} \le 1$ for every $i$, i.e.\ $|1 + \alpha_k\lambda^i_j| \le 1$ for all $i$ and $j$, where $\lambda^i_j$ now denotes the eigenvalues of the matrix $\nabla g(\xi_i, w_*)$. Similarly, for SRKCD we get the stability condition 
$|R(\alpha_k\lambda^i_j)| \le 1$ for all $i$ and $j$, which allows a step size which is roughly $s^2$ larger.

However, in practice this condition is likely both too restrictive and impractical. It is too restrictive because the maximal eigenvalues $\lambda^i_{\text{max}} = \max_j \lambda^i_j$ typically vary significantly with $i$, see Figure~\ref{fig:eigenvalue_distribution} for an example. The likelihood that the corresponding ``worst'' $\nabla g(\xi_i, w_*)$ is chosen often enough to be the dominating factor in terms of stability is very small. That is, with high probability, many of the steps could be significantly larger without issue. It is impractical, because there is no clear relation between the eigenvalues of $\nabla g(\xi_i, w_*)$ and those of $\nabla^2 F(w_*)$, meaning that any known overall statistics about the data cannot be used. Further, there is no way to a priori find out which $g(\xi_i, \cdot)$ will be chosen such that the above issue could be alleviated.

\begin{figure}
  \centering
    \includegraphics[width=\textwidth]{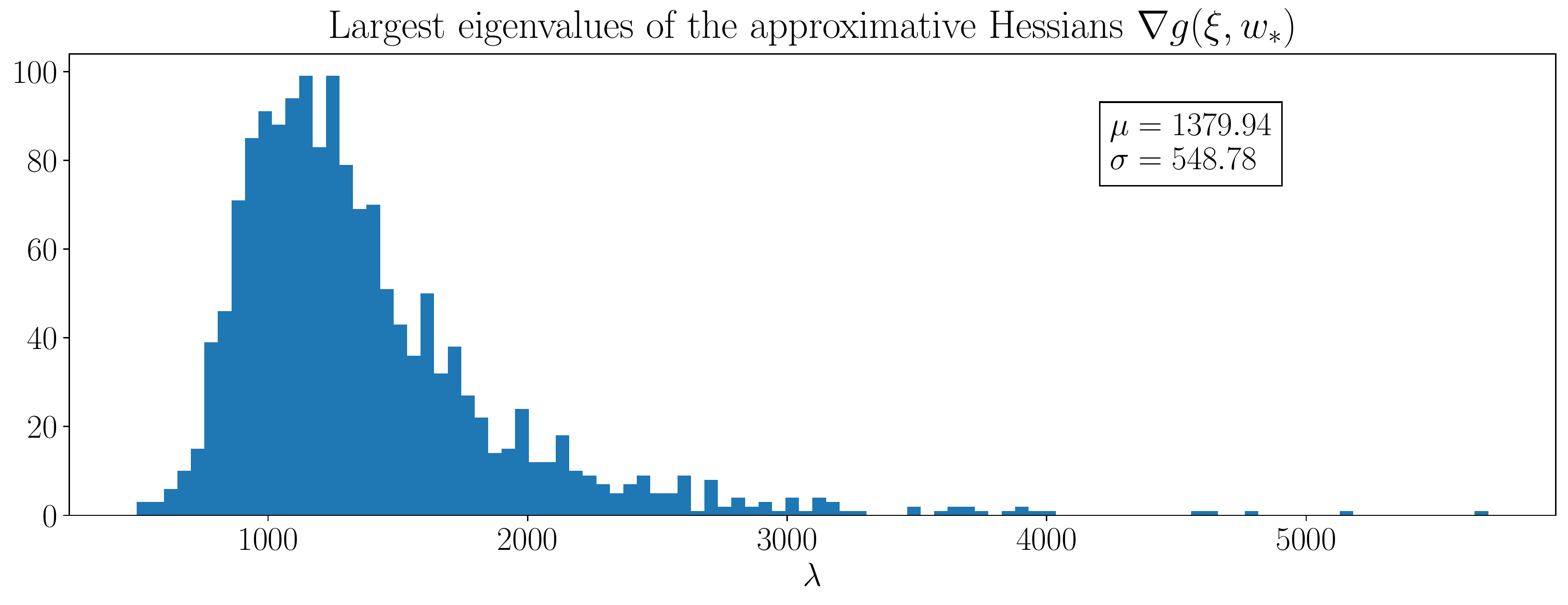}

    \caption{Here we see the distribution of the largest eigenvalues of $\nabla g(\xi_i, w_*)$ for an optimization problem arising from using a convolutional neural network for image classification.      
      The data set with 60000 images is split into non-overlapping batches of 32 images each, and each $\xi_i$ corresponds to one such batch. Each bar indicates how many such batches have a maximal eigenvalue in the specific range. The mean is $\mu = 1379.94$ and the standard deviation $\sigma = 548.78$.}
  \label{fig:eigenvalue_distribution}
\end{figure}

For these reasons, we find it unlikely that one could find a proof of convergence of SRKCD with a stability condition that is reasonably sharp and illustrates the benefit of the scheme. Nevertheless, since the RKC methods have stability regions that are roughly $s^2$ times larger than that of the explicit Euler method, we expect to be able to take roughly $s^2$ times larger steps with SRKCD instead of SGD. 

\section{Numerical experiments} \label{section:experiments}

In order to investigate the stability properties of the SRKCD method in practice, we have performed numerical experiments on a simple academic test example and on a more complex optimization problem arising in a supervised learning applications. The different setups are described in the following subsections.

We have implemented the method in Tensorflow with Keras by observing that~\eqref{eq:SRKCD} can be alternatively expressed as SGD with a very specific momentum term that changes with each stage, and where the same batch of data is used in $s$ consecutive steps. The same idea could equally well be applied in other common machine learning frameworks such as PyTorch. However, we note that it is only valid for relatively small values of $s$; for large $s$ the three-term recursion~\eqref{eq:SRKCD} is needed to avoid catastrophic round-off error accumulation. We write the momentum equations as
\begin{align}\label{eq:SRKCD_as_momentum}
\begin{split}
v_{k,j} &= \eta_j v_{k,j-1} - \ell_j g(\xi_k, w_{k,j-1}), \\
w_{k,j} &= w_{k,j-1} + v_{k,j},
\end{split}
\end{align}
i.e.\ $w_{k,j} - w_{k,j-1} = v_{k,j}$.
But according to~\eqref{eq:SRKCD} we have
\begin{equation*}
   w_{k,j} - w_{k,j-1} = -\nu_j (w_{k,j-1} - w_{k,j-2}) - \tilde{\mu}_j g(\xi_k, w_{k,j-1}),
 \end{equation*}
so we see that the two formulations~\eqref{eq:SRKCD_as_momentum} and~\eqref{eq:SRKCD} are equivalent if we set
\begin{align*}
\eta_j =
\begin{cases}
 -\nu_j, \quad  &2 \leq s, \\
 0, \quad & j = 1,
\end{cases}
\qquad
\text{ and }
\qquad
\ell_{j} = \tilde{\mu}_j\alpha_k.
\end{align*}

We will only investigate stability properties in this paper, rather than convergence or efficiency. That is, we will not run necessarily run the methods until we reach a local minimum but rather stop them after a predetermined number of iterations. We do this for two reasons. First, because it is clear also from these tests that the methods converge (in expectation) whenever we have stability, like for e.g.\ SGD. Secondly, because a proper efficiency comparison would require another paper. Not only because of the number of potential alternative methods and the need to ensure comparably optimized implementations, but also because the optimal choice of step size is intricate. Simply maximizing the step size is not always desirable, as we demonstrate in the next subsection.

Our analysis proves convergence for a step size $\alpha_k$ that decreases with $k$. In these experiments, however, we will use a fixed step size $\alpha$, since we only investigate the first phase of the optimization process. The decreasing step size is only needed to cancel the noise arising from the stochastic approximation as we approach the minimum. 

\subsection{Small-scale linear convex problem}\label{subsec:EXP1}
In the first experiment, we consider the cost functional
\begin{equation*}
  F(w) = \frac{1}{N} \sum_{i=1}^N f(w, x^i) = \frac{1}{N} \sum_{i=1}^N \sum_{j=1}^d \frac{(x^i_j)^2w_j^2}{d},
\end{equation*}
where $d \in \N$ and $w \in \R^d$ are the optimization parameters and each $x^i \in \R^d$ is a known data vector. We take $N = 1000$ and $d=50$. The vectors $x^i$ were sampled randomly from normal distributions with standard deviation $1$ and means $1 + \frac{10i}{d}$. This means that 
\begin{equation*}
\nabla F(w) =  A w,  
\end{equation*}
where $A$ is a diagonal matrix with the diagonal entries
\begin{equation*}
  \lambda_j = A_{j,j} = \frac{2}{Nd} \sum_{i=1}^N (x^i_j)^2.
\end{equation*}
We note that $\{\lambda_j\}_{j=1}^d$ are also the eigenvalues of $A$.
The system is diagonal by design for simplicity, but any system $\dot{w} = Aw$ with a diagonalizable matrix $A$ can be transformed into this form with the eigenvalues preserved. Thus this choice implies no loss of generality.

Since the system is diagonal, stability is determined by the eigenvalues as discussed in Section~\ref{subsec:linearization}. We define $\lmin = \min_j \lambda_j$ and $\lmax = \max_j \lambda_j$. Then if we use $\nabla F$ instead of stochastic approximations $g(\xi, \cdot)$, for stability we must have $\alpha_k \le 2/L$. Further, the optimal step size which minimizes $\max_j |R(h\lambda_j)|$ is $\alpha = \frac{2}{\lmin + \lmax}$, see e.g.~\cite{EftekhariEtal.2021}.

With our particular choice of data, one realization resulted in $\lmin = 0.0791$ and $\lmax = 4.704$. For these values, we ran $15$ iterations of GD and $3$ epochs of SGD with a batch size of $32$ and with different step sizes between $0$ and $2/L = 0.4251$. The final values $F(w)$ are plotted in Figure~\ref{fig:EXP1_SGD}. For GD, we can clearly observe the optimal step size choice $\frac{2}{\lmin + \lmax} = 0.4181$. Closer to $\alpha = 2/L$, the values start to increase again and larger step sizes will lead to instability and divergence. Interestingly, the picture is very similar for SGD. In this case, the step size limit is very slightly smaller than $\alpha = 2/L$ and we can observe some wiggles in the curve due to the stochastic approximations. But the optimal step size choice stays at almost the same position.
\begin{figure}
  \centering
  \includegraphics[width=\textwidth]{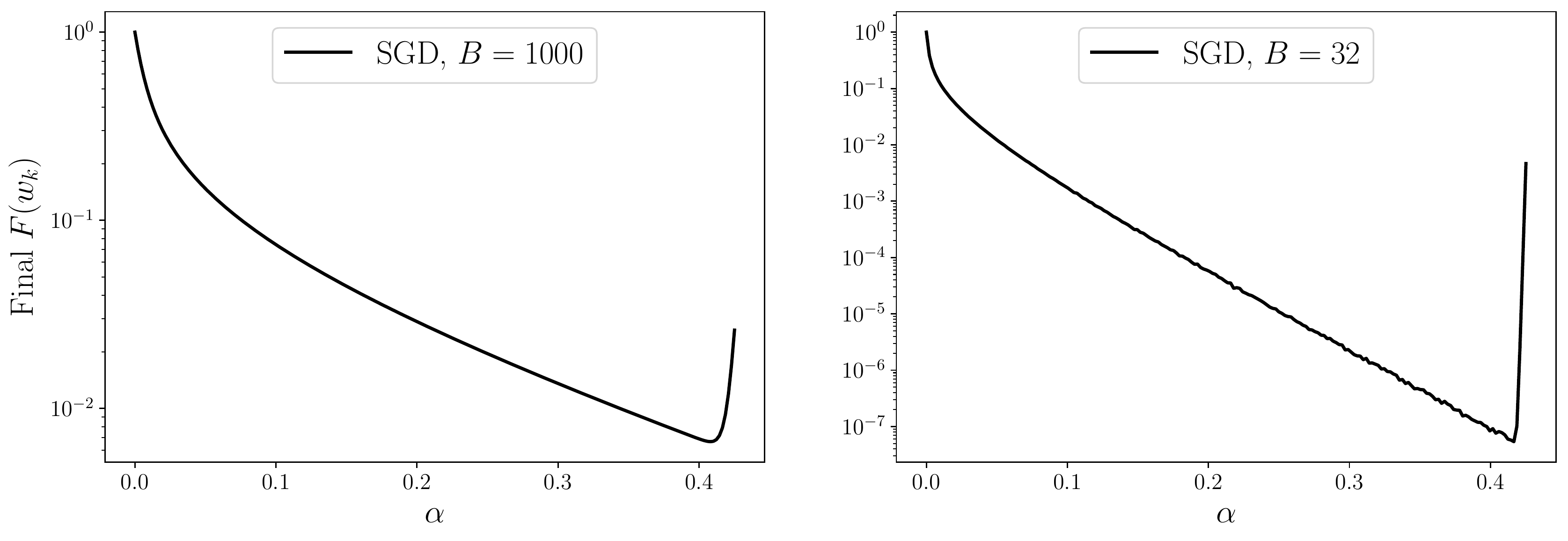}
 
  \caption{SGD with batch size $1000$, i.e.\ GD, (left) and SGD with batch size $32$ (right) when applied to the problem described in Section~\ref{subsec:EXP1}. Note the different scales on the y-axes and that different number of iterations were used.}
  \label{fig:EXP1_SGD}
\end{figure}

In Figure~\ref{fig:EXP1_SRKCD}, we repeat the experiment with a batch size $32$ but now with the SRKCD methods with different $s$. For each $s$, we try step sizes $\alpha \in (0, \frac{b_R}{L})$ where $b_R$ is the maximal value such that $(-b_R, 0)$ is included in the stability region for the corresponding RKC method. It can be shown that $b_R = \frac{2\omega_0 T_s'(\omega_0)}{T_s(\omega_0)}$~\cite[p.425]{HundsdorferVerwer.2003}.
\begin{figure}
  \centering
  \includegraphics[width=\textwidth]{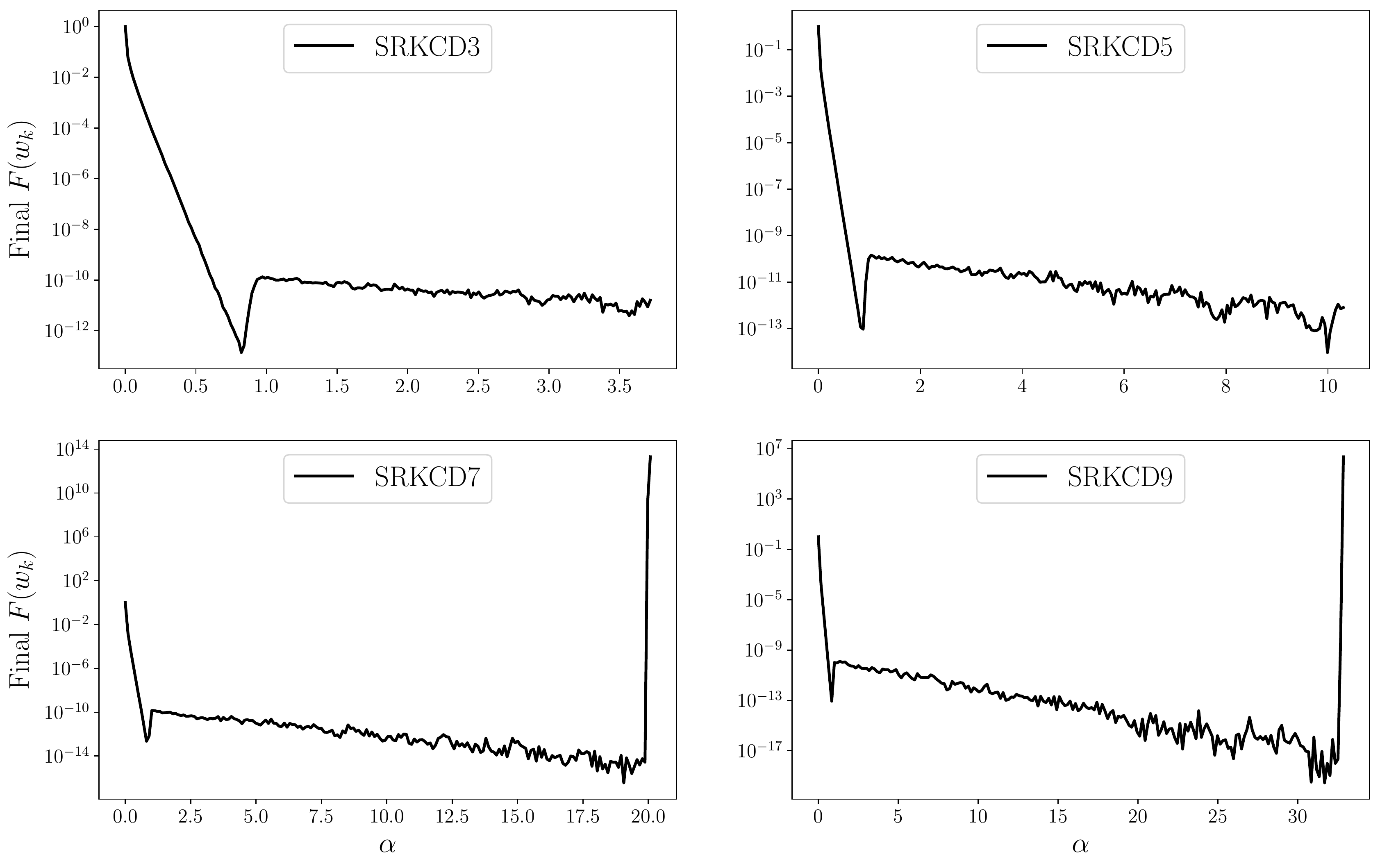}
 
  \caption{SRKCD with batch size $32$ for various values of $s$ when applied to the problem described in Section~\ref{subsec:EXP1}. In each case, $3$ epochs were run.}
  \label{fig:EXP1_SRKCD}
\end{figure}

The first thing to note is that as expected, the stability regions are much larger than for SGD. For larger $s$, they do not quite reach $b_R/L$ in this stochastic setting, but the differences are extremely small. Secondly, we note that all the methods exhibit a characteristic ``dip'' at a relatively small step size. This is similar to the optimal step size dip at $\frac{2}{\lmin + \lmax}$ for SGD. However, since the stability function of the corresponding RKC method has $s$ zeroes instead of only one, there are also many other choices of larger $\alpha$ which yield comparable performance. Indeed, while SGD performs quite well in the interval $\alpha \in (0.3, 0.42)$, SRKCD with $s = 5$ performs roughly equally well for all $\alpha \in (0.5, 10.2)$.

We note that these plots cannot be used for efficiency comparisons, since the latter method has used $5$ times as many evaluations of $g(\xi, \cdot)$ as SGD. Nevertheless, it is clear that the improved RKC stability properties makes SRKCD more robust. If, e.g.\ the values of $\lmin$ and $\lmax$ were not known, then selecting a good step size for SGD is difficult. For SRKCD, the choice almost does not matter.

\subsection{Convolutional neural network} \label{subsec:EXP2}
Next, we consider also an example arising from a real-world problem, namely the classification of images by convolutional neural networks. Such a problem can also be stated on the form $\min_{w} F(w)$, where $F$ now depends on the collection of images, the network structure, and the loss function used to penalize mis-classifications. We refer to e.g.~\cite{BottouCurtisNocedal.2018} for details. For this particular experiment, we set up a simple convolutional neural network consisting of one convolutional layer with a kernel size of $32 \times 32$ upon which we stack two fully connected dense layers with $128$ and $10$ neurons each. The  activation function is ReLu for the first dense layer and softmax for the output layer and we use a crossentropy loss function. We train this network on the MNIST dataset~\cite{MNIST} using both the SGD and the SRKCD algorithm with various stepsizes and number of stages $s$.

While a single training sequence is not so expensive, repeating it many times like in the previous section quickly becomes very time-consuming. Instead of illustrating the behaviour of the methods over a whole interval $\alpha \in (0, a)$ for some $a$, we therefore settle for trying to pin down the practical stability boundary. We recall that since this problem is nonlinear, we can not expect the stability properties to behave as nicely as in the previous experiment. This problem is also larger, but we still use a batch size of $32$. As a consequence, the variance is larger than in the previous experiment, i.e.\ every realization is noisier. To alleviate this, we run each step size $5$ times and take the average.

Figure~\ref{fig:EXP2} shows the final averaged loss values $F(w_k)$ after $1000$ iterations for SGD and SRKCD with $s = 3, 4, 5$, for $10$ step sizes close to the stability limit. The loss function $F$ saturates around $2.4$ which means that for such values the methods are unstable. Smaller values do not rule out that the methods could diverge in further iterations, but typically it rather indicates that we simply did not yet use enough iterations to decrease the loss further.
Thus we can observe that for SGD, the practical stability limit is at around $\alpha = 0.35$. For SRKCD with $s=3$, we instead estimate it to about $\alpha = 1.9$. For $s = 4$ and $s = 5$, we get about $\alpha = 2.8$ and $\alpha = 3.9$. Clearly these are very rough estimates, but as expected the stability properties of SRKCD are superior also in the nonlinear case. We note that e.g. $1.9 < 3^2 \cdot 0.35 = 3.15$, i.e.\ the $s^2$-scaling of the stability regions is not preserved for nonlinear problems. However, this is just one example and other types of problems might behave differently. Fully understanding the general nonlinear setting is a significant research undertaking.
\begin{figure}
  \centering
  \includegraphics[width=\textwidth]{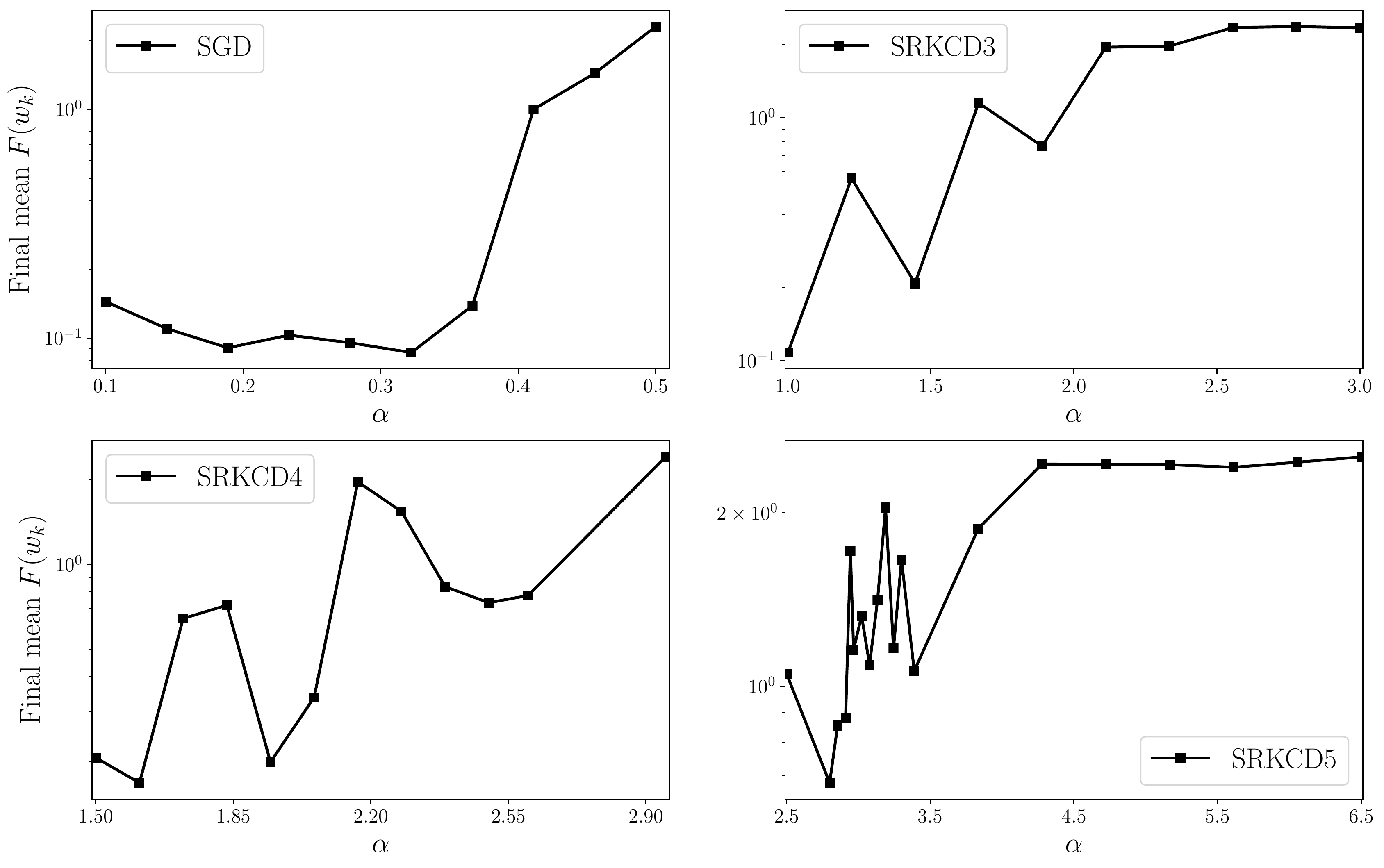}
 
  \caption{SGD and SRKCD with batch size $32$ for various values of $s$ when applied to the problem described in Section~\ref{subsec:EXP2}. In each case, $1000$ iterations were run and the average final value of $F(w_k)$ over $5$ paths is plotted versus the step size $\alpha$.}
  \label{fig:EXP2}
\end{figure}

\section{Conclusions}\label{section:conclusions}

We have introduced and analyzed the stochastic Runge--Kutta--Chebyshev descent (SRKCD) method by showing convergence in expectation to a unique minimum for a strongly convex objective function, and to a stationary point under certain regularity assumptions in the nonconvex case. While we have focused on the SRKCD methods because they exhibit the particular stability properties that were our original motivation, the proof is more general and applies to essentially any Runge-Kutta method. Other such methods may have properties that are of interest in this setting, this remains an open interesting research question.

As we have seen from the numerical experiments, the stability properties of the SRKCD methods are superior to SGD. This remains true also for nonlinear and nonconvex problems. We aim to investigate the efficiency of SRKCD in more detail, and also to compare it more extensively to other popular optimization methods. A key point to take into account here is of course that one iteration of SRKCD requires $s$ approximative gradient evaluations, while most similar methods such as SGD require only one. In the usual setting of stiff ODEs, this is outweighed by being able to take much longer steps. In the current optimization context where it is not necessarily ideal to take the largest possible step, it is no longer as clear. We have, nevertheless, seen from the first numerical experiment that we can expect the SRKCD methods to be more robust in the sense that more step size choices give reasonable results in the absence of good model parameter estimates.

Finally, we note that in this stochastic setting one must use a decreasing step size sequence to actually reach a local minimum. With a fixed step size, we will only reach a neighbourhood of the minimum, whose size depends on the step size and the variance of the approximative gradients. But with a very small step size, the better stability properties of SRKCD are irrelevant. These methods are therefore best employed in the initial phase where larger step sizes can and should be used, and where the convergence towards the minimum is rapid. We think that a hybrid method which utilizes SRKCD with decreasing values of $s$, eventually becoming SGD at $s=1$, could be ideal.

\appendix
\section{Auxiliary results}\label{section:auxiliary}
In this appendix, we collect a few results that are important to our analysis but which are not of great interest on their own.

\subsection{Chebyshev polynomials}
The Chebyshev polynomials are given by
\begin{align*}
&T_0(x)=1, \quad T_1(x) = x, \\
  &T_n(x) = 2x T_{n-1}(x) - T_{n-2}(x), \quad n \ge 2.
\end{align*}

\begin{lemma}\label{lemma:Tn_increasing}
  For fixed $x \geq 1$ it holds that $T_n(x) \geq T_{n-1}(x)$ for $n \geq 1$.
  As a consequence, $T_n(x) \ge 1$ for all $n \ge 0$ if $x \geq 1$.
\end{lemma}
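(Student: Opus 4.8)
The plan is to prove the monotonicity claim $T_n(x) \geq T_{n-1}(x)$ directly by induction on $n$ for fixed $x \geq 1$, and then to obtain the lower bound $T_n(x) \geq 1$ as an immediate corollary by chaining the inequalities down to $T_0(x) = 1$. The base case is trivial: since $T_0(x) = 1$ and $T_1(x) = x$, we have $T_1(x) - T_0(x) = x - 1 \geq 0$ whenever $x \geq 1$.

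For the inductive step I would insert the defining three-term recursion to rewrite the difference of consecutive polynomials as
\begin{equation*}
  T_{n+1}(x) - T_n(x) = 2x T_n(x) - T_{n-1}(x) - T_n(x) = (2x-1) T_n(x) - T_{n-1}(x).
\end{equation*}
The idea is then to exploit that $x \geq 1$ forces $2x - 1 \geq 1$, so that $(2x-1) T_n(x) \geq T_n(x)$ as long as $T_n(x) \geq 0$. This yields
\begin{equation*}
  T_{n+1}(x) - T_n(x) \geq T_n(x) - T_{n-1}(x) \geq 0,
\end{equation*}
where the final inequality is exactly the induction hypothesis.

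The one point requiring care — and the main (minor) obstacle — is that the manipulation above needs not merely $T_n(x) \geq T_{n-1}(x)$ but also the sign information $T_n(x) \geq 0$, in order to multiply the inequality $2x - 1 \geq 1$ through without reversing it. I would therefore strengthen the induction hypothesis to carry both $T_n(x) \geq T_{n-1}(x)$ and $T_n(x) \geq 1$ simultaneously. The bound $T_n(x) \geq 1$ then propagates for free, since $T_n(x) \geq T_{n-1}(x) \geq \cdots \geq T_0(x) = 1$ shows every iterate dominates $T_0(x) = 1$; this also establishes the stated consequence. Beyond the recursion and the elementary estimate $2x - 1 \geq 1$, no further ingredients are needed, so the argument is short and self-contained.
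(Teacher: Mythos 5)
Your proposal is correct and follows essentially the same induction as the paper: both reduce $T_{n+1}(x)-T_n(x)$ via the recursion to $T_n(x)-T_{n-1}(x)$ using $x\ge 1$ together with the nonnegativity of $T_n(x)$. Your explicit strengthening of the induction hypothesis to include $T_n(x)\ge 1$ is a slightly more careful bookkeeping of a fact the paper's proof uses implicitly, but the argument is the same.
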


\begin{proof}
We prove the lemma by induction. The statement is clearly true for $n=1$. Assume that it is true for $n=k$, i.e.\ $T_k(x) -T_{k-1}(x) \geq 0$ for $x \ge 1$.
Then
\begin{align*}
T_{k+1}(x) & = 2x T_k(x) - T_{k-1}(x) \\
& \geq
2T_k(x) - T_{k-1}(x) \\
& =
T_k(x) + \left(T_k(x) - T_{k-1}(x)\right)  \geq T_k(x).
\end{align*}
The fact that $T_n(x) \ge 1$ then follows directly from $T_0(x) = 1$.
\end{proof}

The RKC-update also depends on the derivatives of the Chebyshev polynomials so we also prove the same result for these:
\begin{lemma}\label{lemma:Tnprime_increasing}
For fixed $x \geq 1$ it holds that 
$T_n'(x) \geq T_{n-1}'(x)$ for $n \geq 1$.
Further, $T_n'(x) \ge 4$ for $n \ge 2$ if $x \geq 1$.
\end{lemma}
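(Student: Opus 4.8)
The plan is to mirror the proof of Lemma~\ref{lemma:Tn_increasing}, but working with the \emph{differentiated} three-term recurrence. Differentiating $T_n(x) = 2xT_{n-1}(x) - T_{n-2}(x)$ gives
\begin{equation*}
  T_n'(x) = 2T_{n-1}(x) + 2x T_{n-1}'(x) - T_{n-2}'(x), \quad n \ge 2,
\end{equation*}
together with the initial data $T_0'(x) = 0$ and $T_1'(x) = 1$. I would prove the monotonicity claim by induction, but strengthen the hypothesis to the combined statement $T_n'(x) \ge T_{n-1}'(x) \ge 0$ for $x \ge 1$; carrying the nonnegativity of the derivatives along is exactly what makes the inductive step close.

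For the base case $n = 1$ we have $T_1'(x) = 1 \ge 0 = T_0'(x)$. For the inductive step, assume $T_{n-1}'(x) \ge T_{n-2}'(x) \ge 0$. Subtracting $T_{n-1}'(x)$ from the differentiated recurrence yields
\begin{equation*}
  T_n'(x) - T_{n-1}'(x) = 2T_{n-1}(x) + (2x-1)T_{n-1}'(x) - T_{n-2}'(x).
\end{equation*}
Since $x \ge 1$ we have $2x-1 \ge 1$, so using $T_{n-1}'(x) \ge 0$ from the induction hypothesis gives $(2x-1)T_{n-1}'(x) \ge T_{n-1}'(x) \ge T_{n-2}'(x)$, whence the last two terms together are nonnegative. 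The first term is at least $2$ because $T_{n-1}(x) \ge 1$ by Lemma~\ref{lemma:Tn_increasing}. Hence $T_n'(x) - T_{n-1}'(x) \ge 2 \ge 0$, which gives both $T_n'(x) \ge T_{n-1}'(x)$ and $T_n'(x) \ge 0$, closing the induction.

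For the quantitative bound I would compute $T_2(x) = 2x^2 - 1$, so that $T_2'(x) = 4x \ge 4$ for $x \ge 1$, and then chain the monotonicity just established: for $n \ge 2$ we get $T_n'(x) \ge T_{n-1}'(x) \ge \cdots \ge T_2'(x) = 4x \ge 4$. The only genuine subtlety is the one already flagged, namely the step $(2x-1)T_{n-1}'(x) \ge T_{n-1}'(x)$, which requires $T_{n-1}'(x) \ge 0$. This is recoverable from monotonicity together with $T_0'(x) = 0$, but it is cleanest to build it directly into the induction invariant so that it is available exactly when needed, rather than being argued separately after the fact.
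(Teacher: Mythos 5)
Your proof is correct and follows essentially the same route as the paper: induction on the differentiated three-term recurrence $T_n'(x) = 2T_{n-1}(x) + 2xT_{n-1}'(x) - T_{n-2}'(x)$, using $T_{n-1}(x)\ge 1$ from Lemma~\ref{lemma:Tn_increasing}, and then $T_2'(x)=4x$ for the quantitative bound. Your only deviation is to carry $T_{n-1}'(x)\ge 0$ explicitly in the induction invariant, which is a sensible tidying of a step the paper uses implicitly (it needs $2xT_n'(x)\ge 2T_n'(x)$), not a different argument.
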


\begin{proof}
  From the definition of $T_n$, we find the following recursive formula for the derivatives $T_n'(x)$:
\begin{align*}
&T_0'(x)=0, \ T_1'(x) = 1, \\
 &T_n'(x) = 2 T_{n-1}(x) +2x T_{n-1}'(x) - T_{n-2}'(x), \quad n \ge 2.
\end{align*}
Now we can use induction again like in the previous Lemma.
We clearly have $T_1'(x) \geq T_0'(x)$. 
Assuming that $T_{n}'(x) \geq T_{n-1}'(x)$ holds we get 
\begin{align*}
T_{n+1}'(x) & = 2 T_{n}(x) +2x T_{n}'(x) - T_{n-1}'(x) \\ 
& \geq 
 T_{n}'(x) + \left( T_{n}'(x)- T_{n-1}'(x)  \right)
 \geq T_{n}'(x),
\end{align*}
where we used $T_n(x) \ge 1$ from Lemma~\ref{lemma:Tn_increasing} in the first inequality.
The final statement follows directly from the fact that $T_2'(x) = 4x$.
\end{proof}


\end{document}